\crefname{subsection}{Section}{Sections}
\crefname{assumption}{Assumption}{Assumptions}
\newcommand{\arcq}{ARC\(_q\)\xspace}
\newcommand{\arcqk}{ARC\(_q\)K\xspace}
\newcounter{assumptionbkup}
\newenvironment{assumptionb}[1]{%
  \setcounter{assumptionbkup}{\value{assumption}}
  \renewcommand{\@currentlabel}{\protect\ref*{#1}b}
  \csname phantomsection\endcsname
  \begin{assumption}
}{%
  \end{assumption}
  \setcounter{assumption}{\value{assumptionbkup}}
}
\newenvironment{assumptionc}[1]{%
  \setcounter{assumptionbkup}{\value{assumption}}
  \renewcommand{\@currentlabel}{\protect\ref*{#1}c}
  \csname phantomsection\endcsname
  \begin{assumption}
}{%
  \end{assumption}
  \setcounter{assumption}{\value{assumptionbkup}}
}
\title[ARCqK]{%
  Scalable adaptive cubic regularization methods
}
\author[J.-P. Dussault]{Jean-Pierre Dussault}
\address{%
  GERAD and
  D\'epartement d'Informatique,
  Universit\'e de Sherbrooke,
  Sherbrooke, QC, Canada.
}
\urladdr{\http{www.dmi.usherb.ca/~dussault}}
\email{\mailto{Jean-Pierre.Dussault@USherbrooke.CA}}
\thanks{Research partially supported by an NSERC Discovery Grant}
\author[D. Orban]{Dominique Orban}
\address{%
  GERAD and
  Mathematics and Industrial Engineering Department \\
  Polytechnique Montr\'eal,
  Montr\'eal, QC, Canada.
}
\urladdr{\http{www.gerad.ca/~orban}}
\email{\mailto{dominique.orban@gerad.ca}}
\thanks{Research partially supported by an NSERC Discovery Grant}
\subjclass[2010]{
  65F10,  
  65F22,  
  65F25,  
  65F35,  
  65F50,  
  90C06,  
  90C20,  
  90C30   
}
\keywords{%
  Unconstrained optimization,
  trust-region algorithms,
  adaptive cubic regularization%
}
\begin{document}
  \begin{abstract}
    Adaptive cubic regularization (ARC) methods for unconstrained optimization compute steps from linear systems involving a shifted Hessian in the spirit of the Levenberg-Marquardt and trust-region methods.
    The standard approach consists in performing an iterative search for the shift akin to solving the secular equation in trust-region methods.
    Such search requires computing the Cholesky factorization of a tentative shifted Hessian at each iteration, which limits the size of problems that can be reasonably considered.
    We propose a scalable implementation of ARC named \arcqk in which we solve a set of shifted systems concurrently by way of an appropriate modification of the Lanczos formulation of the conjugate gradient (CG) method.
    At each iteration of \arcqk to solve a problem with \(n\) variables, a range of \(m \ll n\) shift parameters is selected.
    The computational overhead in CG beyond the Lanczos process is thirteen scalar operations to update five vectors of length \(m\) and two \(n\)-vector updates for each value of the shift.
    The CG variant only requires one Hessian-vector product and one dot product per iteration, independently of the number of shift parameters.
    Solves corresponding to inadequate shift parameters are interrupted early.
    All shifted systems are solved inexactly.
    Such modest cost makes our implementation scalable and appropriate for large-scale problems.
    We provide a new analysis of the inexact ARC method including its worst case evaluation complexity, global and asymptotic convergence.
    We describe our implementation and provide preliminary numerical observations that confirm that for problems of size at least 100, our implementation of ARCqK is more efficient than a classic Steihaug-Toint trust region method.
    Finally, we generalize our convergence results to inexact Hessians and nonlinear least-squares problems.
  \end{abstract}

  \nolinenumbers
  \maketitle

  \thispagestyle{firstpage}
  \pagestyle{myheadings}


\section{Introduction}%
\label{sec:Intro}

We consider the unconstrained problem
\begin{equation}
  \label{eq:prob}
  \minimize{x\in \R^n} \ f(x)
\end{equation}
where \(f: \R^n \to \R\) is \(\mathcal{C}^2\) and \(\nabla^2 f\) is Lipschitz continuous.

Adaptive Cubic Regularization (ARC) algorithms, recently explored by~\citet{cartis-gould-toint-2011a,cartis-gould-toint-2011b} are closely related to trust region (TR) methods~\citep{conn-gould-toint-2000} in that steps are computed by solving a sequence of regularized subproblems.
A major theoretical appeal of ARC over TR methods is their optimal worst-case complexity property.
Whereas the number of function evaluations required to reach a point \(x\) for which \(\|\nabla f(x)\|\le\epsilon\) is \(O(\epsilon^{-2})\) for TR, which is no better than steepest descent~\citep{CARTIS201293}, that number is \(O(\epsilon^{-3/2})\) for ARC\@.
\citet{dussault-2015} develops the \arcq variant, which uses the usual quadratic model but computes regularized Newton steps satisfying the cubic subproblem's optimality conditions, and obtains simple proofs highlighting the key properties that ensure optimal worst-case complexity.

Both \arcq and TR algorithms employ the quadratic model
\begin{equation}
  \label{eq:quad-model}
  q_x(d) = f(x) + \nabla f(x)^T d + \tfrac{1}{2} d^T \nabla^2 f(x) d.
\end{equation}
At each iteration, \arcq minimizes a cubic model~\citep{Gr81}
\begin{equation}
  \label{eq:ARC}
  c_x^{\alpha}(d) :=  q_x(d) + \tfrac{1}{3} \alpha^{-1} \|d\|^3,
\end{equation}
where \(\alpha > 0\) plays a role similar to the trust-region radius in TR
methods.
%
%
As in TR methods, minimizing~\eqref{eq:ARC} involves solving the shifted linear system
\begin{equation}
  \label{eq:LevMar}
  (\nabla^2 f(x) + \lambda I) d = -\nabla f(x),
\end{equation}
while searching an appropriate value of the shift \(\lambda > 0\).
\citet{cartis-gould-toint-2011a} use GLTR~\citep{gltr}, which builds upon the truncated conjugate gradient approach of~\citet{steihaug-1983} and~\citet{toint-1981} and further explores the boundary of the trust region if the latter is determined to be active.
One disadvantage of GLTR resides in its storage and computational requirements as Lanczos vectors must be either stored or re-generated to explore the trust-region boundary.
This additional cost, which is incurred from the very first iteration, makes the approach unsuitable for large-scale problems.
In addition, numerical experiments reveal that the method of~\citet{steihaug-1983} and~\citet{toint-1981} yields an adequate solution most of the time, so that further explorations of the boundary of the trust region may not pay off, as already remarked by the authors of GLTR\@.


In this paper, we propose \arcqk, an implementation of \arcq that obtains
an approximate minimizer of~\eqref{eq:ARC} using CG-Lanczos with shifts, a Lanczos implementation of the conjugate gradient algorithm that solves several shifted systems simultaneously proposed by~\citet{frommer-maass-1999}.
At each iteration of \arcqk, a range of \(m \ll n\) shift parameters is selected.
The CG variant solves the shifted systems concurrently and interrupts those corresponding to shifts that are too small as soon as negative curvature is detected.
The computational overhead beyond the Lanczos process is thirteen scalar operations to update five vectors of length \(m\) and two \(n\)-vector updates for each value of the shift.

Of course, the overhead of using some 31 shift will not be easily overcomed for small scaled instances. Our results, summarized in performance profile graphs, confirms the efficiency of our approach for problems of dimensions above 100 in the Cutest collection when compared to a Steihaug-Toint approach.

The rest of this paper is organized as follows. We focus the presentation on the worst case function evaluation complexity to achieve first order optimality conditions. We recall the \arcq algorithm and its worst case evaluation complexity analysis in \S\ref{sec:AA}. We introduce \arcqk and analyze its worst case complexity. We then introduce the CG-Lanczos method to solve the shifted systems and analyze its computational complexity. We report on numerical experience on problems from the \textsf{CUTEst} collection~\citep{gould-orban-toint-2015}.

\subsection*{Notation}

When the context is clear, we simply write \(q(d)\) and \(c^{\alpha}(d)\) instead of \(q_x(d)\) and \(c_x^{\alpha}(d)\).
In an iterative procedure, we write \(q_k(d)\) and \(c_k(d)\) instead of \(q_{x_k}(d)\) and \(c_{x_k}^{\alpha_k}(d)\).
Throughout the text, we use \(\|\cdot\|\) to denote the Euclidean norm.
For future reference, the gradient of~\eqref{eq:ARC} is
\begin{equation}
 \label{eq:ARC-grad}
  \nabla c^{\alpha}_x(d) = \nabla f(x) + \nabla^2 f(x) d + \alpha^{-1} \|d\| d.
\end{equation}
If \(A\) is a square symmetric matrix, we write \(A \succ 0\) to mean that \(A\) is positive definite.
If \(\{\alpha_k\}\) and \(\{\beta_k\}\) are real positive sequences converging to zero, we use the Landau notation \(\alpha_k = O(\beta_k)\), \(\alpha_k = \Omega(\beta_k)\) and \(\alpha_k = \Theta(\beta_k)\) to mean that there exist a constant \(C > 0\) and an iteration \(k_0 \in \N\) such that \(\alpha_k \leq C \beta_k\), \(\alpha_k\geq C^{-1} \beta_k\), and \(C^{-1} \beta_k \leq \alpha_k \leq C \beta_k\) for all \(k \geq k_0\), respectively.

\section{The \texorpdfstring{\arcq}{ARCq} algorithm}%
\label{sec:AA}

The basic algorithm, described as \Cref{alg:ARCq}, is similar to the basic TR method; \arcq uses the cubic regularized model to compute the direction \(d\) but the quadratic model in the algorithm flow.

The following result characterizes global minimizers of \(c_k(d)\).

\begin{theorem}[\protect{\citealp[Theorem 3.1]{cartis-gould-toint-2011a}}]%
\label{M-R-comp}
Any \(d_k\) is a global minimizer of \(c_k(d)\) if and only if
\begin{subequations}
  \begin{align}
    \nabla f(x_k) + (\nabla^2 f(x_k) + \lambda_k I) d_k & = 0,
    \label{eq:Gmind}
    \\
    \nabla^2 f(x_k) + \lambda_k I & \succeq 0,
    \label{eq:SDP}
    \\
    \lambda_k & = \|d_k\| / \alpha_k.
    \label{eq:Gmind2}
  \end{align}
\end{subequations}
If \(\nabla^2 f(x_k) + \lambda_k I  \succ 0\), then \(d_k\) is unique.
\end{theorem}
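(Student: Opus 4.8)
The plan is to prove both implications from a single algebraic identity for the change in the cubic model. Write $g := \nabla f(x_k)$, $H := \nabla^2 f(x_k)$ and $\sigma := \alpha_k^{-1}$, so that $c_k(d) = f(x_k) + g^T d + \tfrac12 d^T H d + \tfrac13 \sigma \|d\|^3$, a function that is $\mathcal{C}^1$ on all of $\R^n$ with $\nabla c_k(d) = g + H d + \sigma\|d\| d$. First I would show that for any $s, d \in \R^n$ with $g + (H + \lambda_k I) d = 0$ and $\lambda_k = \sigma\|d\|$ one has
\begin{equation*}
  c_k(s) - c_k(d) = \tfrac12 (s - d)^T (H + \lambda_k I)(s - d) + \tfrac{\sigma}{6}\bigl(\|s\| - \|d\|\bigr)^2\bigl(2\|s\| + \|d\|\bigr).
\end{equation*}
This comes from expanding $q_k(s) - q_k(d)$, substituting $g = -(H + \lambda_k I) d$ to obtain $q_k(s) - q_k(d) = \tfrac12 (s - d)^T (H + \lambda_k I)(s - d) - \tfrac{\lambda_k}{2}(\|s\|^2 - \|d\|^2)$, adding the cubic terms, using $\lambda_k = \sigma\|d\|$, and factoring the scalar polynomial $2a^3 - 3a^2 b + b^3 = (a - b)^2 (2a + b)$ with $a = \|s\|$ and $b = \|d\|$.

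For sufficiency, assume \eqref{eq:Gmind}--\eqref{eq:Gmind2}. The identity applies, and both terms on its right-hand side are nonnegative: the first by \eqref{eq:SDP}, the second because $\sigma > 0$ and $\|s\|, \|d\| \ge 0$. Hence $c_k(s) \ge c_k(d)$ for every $s$, so $d_k = d$ is a global minimizer. If in addition $H + \lambda_k I \succ 0$, the first term is strictly positive for every $s \ne d$, so $c_k(s) > c_k(d)$ and $d_k$ is unique.

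For necessity, let $d := d_k$ be a global minimizer. Since $c_k$ is $\mathcal{C}^1$, $\nabla c_k(d) = 0$, which is \eqref{eq:Gmind} with $\lambda_k := \sigma\|d\| = \|d\| / \alpha_k \ge 0$, and this is also \eqref{eq:Gmind2}; the identity is therefore available. It remains to establish \eqref{eq:SDP}. If $d \ne 0$, fix any $w$ with $d^T w \ne 0$ and set $s := d - \tfrac{2 (d^T w)}{\|w\|^2}\, w$; a direct computation gives $s \ne d$ and $\|s\| = \|d\|$, so the cubic term in the identity vanishes and global optimality forces $0 \le c_k(s) - c_k(d) = \tfrac12 (s - d)^T (H + \lambda_k I)(s - d)$, i.e.\ $w^T (H + \lambda_k I) w \ge 0$. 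Since $\{w : d^T w \ne 0\}$ is dense in $\R^n$, continuity of the quadratic form gives $H + \lambda_k I \succeq 0$. If $d = 0$, then $g = 0$ and $\lambda_k = 0$, and evaluating $c_k$ along $t w$ for small $t > 0$ yields $\tfrac12 t^2 w^T H w + O(t^3) \ge 0$, hence $H \succeq 0$.

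The hard part will be the necessity of \eqref{eq:SDP}. The local second-order condition at $d$ only delivers $\nabla^2 c_k(d) = H + \lambda_k I + (\lambda_k/\|d\|^2)\, d d^T \succeq 0$, which is strictly weaker than $H + \lambda_k I \succeq 0$ because the rank-one term relaxes curvature along $d$; global information is genuinely required. The trick of probing $c_k$ at points $s$ lying on the sphere $\|s\| = \|d\|$ through $d$ is precisely what cancels the cubic contribution exactly and turns global optimality into the semidefiniteness statement. The remaining ingredients -- the expansion producing the identity and the scalar factorization -- are routine.
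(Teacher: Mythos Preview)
Your argument is correct: the algebraic identity is verified, the sufficiency and uniqueness parts follow immediately, and your reflection trick $s = d - 2(d^T w)\,w/\|w\|^2$ together with the density argument cleanly yields the semidefiniteness condition in the necessity direction. The $d=0$ case is also handled properly.

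However, there is nothing in the paper to compare against. The paper does not prove this theorem; it merely states it and cites \citet[Theorem~3.1]{cartis-gould-toint-2011a} for the proof. Your approach is in fact essentially the one used in that reference (and originating with Griewank), built on the same key identity
\[
  c_k(s) - c_k(d) = \tfrac12 (s-d)^T (H + \lambda_k I)(s-d) + \tfrac{\sigma}{6}\bigl(\|s\| - \|d\|\bigr)^2\bigl(2\|s\| + \|d\|\bigr),
\]
so you have reconstructed the standard proof rather than diverged from it.
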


\smallskip

\Cref{M-R-comp} suggests a computational procedure similar to the~\citet{more-sorensen-1983} approach in trust-region methods.
\citet{dussault-2015} improves the procedure by using changes of variables to reduce the number of factorizations to one per successful iteration.
\Cref{alg:ARCq} summarizes the \arcq framework.

\begin{algorithm}[htbp]
  \caption{%
    \label{alg:ARCq}
    \arcq Framework.
 }
 \begin{algorithmic}[1]
   \State Initialize \(x_0 \in \R^n\), \(\alpha_0 > 0\), \(0<\eta_1<\eta_2<1\), and \(0<\gamma_1<1<\gamma_2\), \(k = 0\)
   \Repeat
     \State\label{line:globalmin} compute \(d_k\) as a global minimizer of \(c_k\)
     \State compute \(\rho_k = \dfrac{f(x_k) - f(x_k + d_k)}{q_k(0) - q_k(d_k)}\)
     \If {\(\rho_k < \eta_1\)}
       \State \(\alpha_{k+1} = \gamma_1 \alpha_k\) \Comment{Unsuccessful iteration}
     \Else
       \State \(x_{k+1} = x_k + d_k\) \Comment{Successful iteration}
       \If {\(\rho_k > \eta_2\)}
         \State \(\alpha_{k+1} = \gamma_2 \alpha_k\)  \Comment{Very successful iteration}
       \Else
         \State \(\alpha_{k+1} = \alpha_k\)
       \EndIf
     \EndIf
     \State \(k \gets k + 1\)
  \Until  termination\_criterion
  \end{algorithmic}
 \end{algorithm}

\Cref{alg:ARCq} differs from the original ARC of~\citet{cartis-gould-toint-2011a} only in the use of the quadratic model \(q_k\) to compute the ratio \(\rho\) instead of the cubic model \(c_k\). The convergence and complexity results are identical and rely on the following assumptions.


\begin{assumption}%
  \label{asm:C2}
  \(f\) is twice continuously differentiable.
\end{assumption}

\begin{assumption}%
  \label{asm:d2f-lipschitz}
  There exists \(L > 0\) such that \(\|\nabla^2 f(x)-\nabla^2f(y)\|\le L\|x-y\|\) for all \(x\), \(y \in \R^n\).
\end{assumption}

\begin{assumption}%
  \label{asm:lbnd}
  There exists a value \(f_{\text{low}}\) such that \(f(x_k) \ge f_{\text{low}}\) for all \(k\).
\end{assumption}

\begin{assumption}%
  \label{asm:globalmin}
  \(d\) is computed as a global minimizer of \(c_x^{\alpha}\).
\end{assumption}

\begin{assumption}%
  \label{asm:alpha}
  There exists \(\alpha_{\max}>0\) such that \(\alpha_k\le\alpha_{\max}\) for all \(k\).
\end{assumption}

Under the above assumptions,~\citet{dussault-2015} establishes the following global convergence result.

\begin{theorem}[\protect{\citealp[Corollary~\(3.3\)]{dussault-2015}}]%
\label{th:ConvCNO}
Let \(\{x_k\}\) be generated by \Cref{alg:ARCq} and let \Crefrange{asm:C2}{asm:alpha} be satisfied.
Any cluster point of \(\{x_k\}\) satisfies the second-order necessary optimality conditions.
\end{theorem}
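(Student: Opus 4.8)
The plan is to run the now-standard ARC convergence argument while leaning on the global-minimizer characterization of \Cref{M-R-comp}, so that the second-order part of the conclusion falls out of condition \eqref{eq:SDP} almost for free. Write $\lambda_k=\|d_k\|/\alpha_k$ for the shift attached to $d_k$ by \Cref{M-R-comp}, so that $\nabla f(x_k)+(\nabla^2 f(x_k)+\lambda_k I)d_k=0$ and $\nabla^2 f(x_k)+\lambda_k I\succeq 0$. If $d_k=0$ at some iteration then these two relations are exactly the second-order necessary conditions at $x_k$, so I would assume $d_k\neq 0$ throughout. The key preliminary estimate is obtained by substituting \eqref{eq:Gmind} into $q_k(0)-q_k(d_k)=-\nabla f(x_k)^Td_k-\tfrac12 d_k^T\nabla^2 f(x_k)d_k$ and then using $d_k^T\nabla^2 f(x_k)d_k\ge -\lambda_k\|d_k\|^2$ from \eqref{eq:SDP}, which yields the strong model decrease
\[
q_k(0)-q_k(d_k)\ \ge\ \tfrac12\lambda_k\|d_k\|^2\ =\ \frac{\|d_k\|^3}{2\alpha_k}.
\]
This is the one place where \Cref{asm:globalmin} (global, not merely stationary, minimization of $c_k$) is genuinely used.

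Next I would show that $\alpha_k$ stays away from $0$. A Taylor expansion with integral remainder and \Cref{asm:d2f-lipschitz} give $f(x_k+d_k)\le q_k(d_k)+\tfrac{L}{6}\|d_k\|^3$, hence $\rho_k\ge 1-\tfrac{L}{6}\|d_k\|^3/(q_k(0)-q_k(d_k))\ge 1-\tfrac{L\alpha_k}{3}$ by the display above. Thus any iteration with $\alpha_k\le 3(1-\eta_2)/L$ is very successful, so $\alpha_{k+1}\ge\alpha_k$; a one-line induction then yields $\alpha_k\ge\alpha_{\min}:=\min\{\alpha_0,\gamma_1\,3(1-\eta_2)/L\}>0$ for all $k$. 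In particular there are infinitely many successful iterations, for otherwise $\alpha_k$ would shrink by the factor $\gamma_1<1$ at all large $k$. Writing $\mathcal S$ for the set of successful iterations, I would then combine $f(x_k)-f(x_{k+1})\ge\eta_1(q_k(0)-q_k(d_k))\ge\tfrac{\eta_1}{2\alpha_{\max}}\|d_k\|^3$ on $\mathcal S$ (and $x$ constant off $\mathcal S$) with \Cref{asm:lbnd} and telescope to obtain $\sum_{k\in\mathcal S}\|d_k\|^3<\infty$. Hence $\|d_k\|\to 0$ as $k\to\infty$ in $\mathcal S$, and $\lambda_k=\|d_k\|/\alpha_k\le\|d_k\|/\alpha_{\min}\to 0$ as well.

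Finally I would pass to the limit at a cluster point $x^*$, picking $x_{k_i}\to x^*$. Since $x$ moves only on $\mathcal S$ and $\|d_k\|\to 0$ over $\mathcal S$, replacing each $k_i$ by the largest index $s_i\le k_i$ in $\mathcal S$ produces indices $s_i\in\mathcal S$ with $s_i\to\infty$ and $x_{s_i}=x_{k_i}-d_{s_i}\to x^*$ (the case $s_i=k_i$ being trivial). From \eqref{eq:Gmind}, $\|\nabla f(x_{s_i})\|\le(\|\nabla^2 f(x_{s_i})\|+\lambda_{s_i})\|d_{s_i}\|$, and $\|\nabla^2 f(x_{s_i})\|\le\|\nabla^2 f(x^*)\|+L\|x_{s_i}-x^*\|$ stays bounded, so $\nabla f(x_{s_i})\to 0$ and $\nabla f(x^*)=0$ by \Cref{asm:C2}. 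Likewise, letting $i\to\infty$ in $\nabla^2 f(x_{s_i})+\lambda_{s_i}I\succeq 0$ and using $\lambda_{s_i}\to 0$ together with continuity of $\nabla^2 f$ gives $\nabla^2 f(x^*)\succeq 0$, which with $\nabla f(x^*)=0$ is the claim.

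I expect the main obstacle to be the uniform lower bound $\alpha_k\ge\alpha_{\min}$: it is precisely what forces the \emph{shifts} $\lambda_k$ (and not just the steps $d_k$) to vanish along $\mathcal S$, and hence what makes the limiting second-order condition survive the passage to the limit. The delicate point in establishing it is that the Taylor estimate must be balanced against the strong model decrease $q_k(0)-q_k(d_k)\ge\|d_k\|^3/(2\alpha_k)$, which in turn is only available because $d_k$ solves the cubic subproblem globally. The remaining work — converting "$\|d_k\|\to 0$ over successful iterations" into a statement about an arbitrary cluster point — is routine but needs the reindexing above because the iterate sequence stalls during runs of unsuccessful iterations.
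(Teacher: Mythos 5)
Your proof is correct and follows the standard route: the paper itself only cites this result from \citet{dussault-2015} rather than proving it, but your argument is precisely the exact-subproblem specialization ($r_k=0$, $\beta=1$) of the paper's own \Cref{le:deltaqmK} and \Cref{le:alphaMinK}, combined with the telescoping bound under \Cref{asm:lbnd,asm:alpha} and the passage to the limit in \eqref{eq:Gmind}--\eqref{eq:SDP} using $\lambda_k=\|d_k\|/\alpha_k\le\|d_k\|/\alpha_{\min}\to 0$ over the successful iterations. The reindexing to the last successful iteration preceding each $k_i$ is handled correctly, so no gaps remain.
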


A further property related to Newton's method applied to~\eqref{eq:Gmind}--\eqref{eq:Gmind2} yields the following complexity result.

\begin{theorem}[\protect{\citealp[Theorem~\(4.2\)]{dussault-2015}}]%
  \label{th:CplxARCq}
  Under \Crefrange{asm:C2}{asm:alpha}, the number of iterations required by \Cref{alg:ARCq} to generate \(\bar x\) such that \(\|\nabla f(\bar x)\|<\epsilon\) is at most \(O(\epsilon^{-3/2})\).
\end{theorem}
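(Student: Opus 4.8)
The plan is to follow the now-standard ARC complexity template, but to feed it the optimality conditions of \Cref{M-R-comp} in place of a generic Cauchy-type estimate, since it is exactly those conditions that lift the bound from $O(\epsilon^{-2})$ to $O(\epsilon^{-3/2})$. Write $H_k := \nabla^2 f(x_k)$. First I would record a lower bound on the model decrease at every iteration: using \eqref{eq:Gmind},
\[
  q_k(0) - q_k(d_k) = -\nabla f(x_k)^T d_k - \tfrac12 d_k^T H_k d_k
  = \tfrac12 d_k^T (H_k + \lambda_k I) d_k + \tfrac12 \lambda_k \|d_k\|^2
  \;\ge\; \tfrac12 \lambda_k \|d_k\|^2 = \tfrac{\|d_k\|^3}{2\alpha_k},
\]
where the inequality uses \eqref{eq:SDP} and the last equality uses \eqref{eq:Gmind2}; with \Cref{asm:alpha} this gives $q_k(0)-q_k(d_k) \ge \|d_k\|^3/(2\alpha_{\max})$. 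Next I would show $\alpha_k$ stays bounded away from $0$: a second-order Taylor expansion with integral remainder and \Cref{asm:d2f-lipschitz} gives $f(x_k+d_k) - q_k(d_k) \le \tfrac{L}{6}\|d_k\|^3$, and since $q_k(0) = f(x_k)$ we get $1 - \rho_k = \bigl(f(x_k+d_k) - q_k(d_k)\bigr)/\bigl(q_k(0) - q_k(d_k)\bigr) \le L\alpha_k/3$. Hence whenever $\alpha_k \le \theta := 3(1-\eta_2)/L$ the iteration is successful, so $\alpha_{k+1} \ge \alpha_k$; a short induction then yields $\alpha_k \ge \alpha_{\min} := \min\{\alpha_0,\, \gamma_1\theta\} > 0$ for all $k$, so that $\alpha_k \in [\alpha_{\min},\alpha_{\max}]$ throughout.

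The crux is the ``long step'' bound relating $\|d_k\|$ to the gradient at the next iterate. Writing $\nabla f(x_{k+1}) = \nabla f(x_k) + H_k d_k + \int_0^1 (\nabla^2 f(x_k + t d_k) - H_k)\, d_k\, dt$ and using \eqref{eq:Gmind}, \eqref{eq:Gmind2} and \Cref{asm:d2f-lipschitz},
\[
  \|\nabla f(x_{k+1})\| \le \lambda_k \|d_k\| + \tfrac{L}{2}\|d_k\|^2 = \Bigl(\tfrac{1}{\alpha_k} + \tfrac{L}{2}\Bigr)\|d_k\|^2 \le \Bigl(\tfrac{1}{\alpha_{\min}} + \tfrac{L}{2}\Bigr)\|d_k\|^2,
\]
so that $\|d_k\| \ge \kappa\,\|\nabla f(x_{k+1})\|^{1/2}$ with $\kappa := (\alpha_{\min}^{-1} + L/2)^{-1/2}$. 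This is the ``better than quadratic'' residual estimate that trust region lacks.

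To assemble the count, suppose $\|\nabla f(x_j)\| \ge \epsilon$ for all $j \le K$, and let $\mathcal{S}_K$ and $\mathcal{U}_K$ be the sets of successful and unsuccessful iteration indices below $K$. For $k \in \mathcal{S}_K$, chaining the model-decrease bound, $\rho_k \ge \eta_1$, and the long-step bound gives $f(x_k) - f(x_{k+1}) \ge \eta_1 \|d_k\|^3/(2\alpha_{\max}) \ge \tfrac{\eta_1\kappa^3}{2\alpha_{\max}}\epsilon^{3/2}$; summing over $\mathcal{S}_K$, telescoping, and invoking \Cref{asm:lbnd} yields $|\mathcal{S}_K| \le 2\alpha_{\max}\bigl(f(x_0) - f_{\text{low}}\bigr)/(\eta_1\kappa^3)\,\epsilon^{-3/2}$. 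Finally, since $\alpha_{k+1}/\alpha_k \in \{\gamma_1, 1, \gamma_2\}$ and $\alpha_k \in [\alpha_{\min},\alpha_{\max}]$, taking logarithms in $\alpha_{\min} \le \alpha_K \le \alpha_0\, \gamma_2^{|\mathcal{S}_K|}\gamma_1^{|\mathcal{U}_K|}$ bounds $|\mathcal{U}_K| \le \log_{1/\gamma_1}(\alpha_0/\alpha_{\min}) + \log_{1/\gamma_1}(\gamma_2)\,|\mathcal{S}_K| = O(|\mathcal{S}_K|)$, whence $K = |\mathcal{S}_K| + |\mathcal{U}_K| = O(\epsilon^{-3/2})$ (and the evaluation count matches the iteration count up to a constant).

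I expect the second bulletized step of the first paragraph together with the long-step bound to be the real obstacle. The inequality $\|\nabla f(x_{k+1})\| = O(\|d_k\|^2)$ depends not only on the regularized-Newton identity \eqref{eq:Gmind}--\eqref{eq:Gmind2} but also, crucially, on the uniform lower bound $\alpha_k \ge \alpha_{\min}$, which is not among the stated assumptions and must be extracted from the ``very successful when $\alpha_k$ is small'' mechanism. Once that lower bound and the long-step estimate are in hand, the telescoping-sum argument and the unsuccessful-iteration accounting are routine.
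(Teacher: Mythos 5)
Your proof is correct and follows essentially the same route as the paper: the paper cites this result from \citet{dussault-2015} and only sketches the ingredients, but your three estimates (the model decrease \(\Delta q_k \ge \tfrac12\lambda_k\|d_k\|^2 = \|d_k\|^3/(2\alpha_k)\), the lower bound \(\alpha_k \ge \alpha_{\min}\), and the long-step bound \(\|\nabla f(x_{k+1})\| \le (\tfrac12 L + 1/\alpha_{\min})\|d_k\|^2\)) are precisely the exact-solve specializations of \Cref{le:deltaqmK}, \Cref{le:alphaMinK} and \Cref{le:quadK}, assembled by the same telescoping argument as in \Cref{th:CplxARCqK}. The only cosmetic differences are a sharper Taylor-remainder constant (\(L/6\) versus the paper's \(L\)) and a product-of-update-ratios count of unsuccessful iterations in place of the paper's per-successful-iteration accounting; both are standard and equivalent.
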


\Cref{th:ConvCNO} and \Cref{th:CplxARCq} rely on exact minimization of the cubic model~\eqref{eq:ARC}.
The analysis involving global minimizers of~\eqref{eq:ARC} ensures that \(\Delta q_k(d_k):= q_k(d_k)- q_k(0) \ge \tfrac{1}{2} \lambda_k \|d_k\|^2\) and that \(\|\nabla f(x_{k+1})\| \le \tfrac{1}{2} L \|d_k\|^2 + \lambda_k \|d_k\|\).
The proof of \Cref{th:CplxARCq} relies on the fact that both \(\lambda_k = \Omega(\|d_k\|)\)
 and  \(\lambda_k = O(\|d_k\|)\) hold if \(\alpha_k\) is bounded away from zero and \(d_k\) is computed as a global minimizer of~\eqref{eq:ARC}, for in that case \(\lambda_k = \|d_k\| / \alpha_k\).

In large-scale applications, we must instead consider approximate solutions to~\eqref{eq:ARC}, the analysis of which is the subject of the next section.

\subsection{Approximate model solution}%
\label{sec:ApproxCubic}

In this section, we show that the complexity bound of \Cref{th:CplxARCq} continues to hold for approximate minimizers if we can guarantee that \(\alpha_k\) is bounded away from zero and \(\lambda_k = \Theta(\|d_k\|)\).
\Cref{le:alphaMinK} establishes the first property.
In \Cref{sec:Lanczos}, we propose a way to compute \(\lambda_k\) and \(d_k\) so that \(\lambda_k = \Theta(\|d_k\|)\) as well as \(d_k^T (\nabla^2 f(x) + \lambda_k I) d_k \geq 0\).


For the time being, we require \(\lambda_k\) and \(d_k\) to satisfy the following approximation to~\eqref{eq:Gmind}--\eqref{eq:Gmind2}:
\begin{subequations}
  \begin{align}
    \nabla f(x_k) + (\nabla^2 f(x_k) + \lambda_k I) d_k & = r_k,
    \label{eq:Kr1} \\
    d_k^T (\nabla^2 f(x_k) + \lambda_k I) d_k & \ge 0
    \label{eq:Kr2} \\
    \frac1{\beta} \frac{\|d_k\|}{\alpha_k} \le
    \lambda_k & \le \beta \frac{\|d_k\|}{\alpha_k},
    \label{eq:Kr3}
  \end{align}
\end{subequations}
where \(r_k\) is a residual and \(\beta \ge 1\) is a sampling parameter.
%



We modify \Cref{asm:globalmin} as follows.

\begin{assumptionb}{asm:globalmin}%
  \label{asm:approxmin}
  At line~\ref{line:globalmin} of \Cref{alg:ARCq}, we compute a direction \(d\) satisfying~\eqref{eq:Kr1}--\eqref{eq:Kr3}.  
\end{assumptionb}

In \Cref{sec:convergence}, we state appropriate conditions on \(\|r\|\) that ensure convergence and maintain the complexity bound of \Cref{th:CplxARCq}.

\subsection{Convergence analysis and complexity}%
\label{sec:convergence}

We are now able to generalize the basic lemmas from~\citet{dussault-2015} to approximate minimizers of~\eqref{eq:ARC}.

\begin{lemma}\label{le:deltaqmK}
  Let \Cref{asm:C2,asm:approxmin} be satisfied and assume that \(r_k^T d_k = 0\). Then,
  \[
    \Delta q_k(d_k) =
    q_k(0) - q_k(d_k) \geq
    \tfrac{1}{2} \, \|d_k\|^{2} \lambda_k \ge
    \frac{\|d_k\|^{3}}{2 \beta \alpha_k}.
  \]
\end{lemma}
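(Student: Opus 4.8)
The plan is entirely algebraic and relies only on unwinding the definition of the quadratic model and substituting conditions~\eqref{eq:Kr1}--\eqref{eq:Kr3}. First I would expand
\[
  \Delta q_k(d_k) = q_k(0) - q_k(d_k) = -\nabla f(x_k)^T d_k - \tfrac{1}{2} d_k^T \nabla^2 f(x_k) d_k
\]
directly from~\eqref{eq:quad-model}. The crucial manipulation is to eliminate the gradient using the approximate stationarity condition~\eqref{eq:Kr1}, which gives \(\nabla f(x_k) = r_k - (\nabla^2 f(x_k) + \lambda_k I) d_k\). Taking the inner product with \(d_k\) and using the hypothesis \(r_k^T d_k = 0\) yields \(-\nabla f(x_k)^T d_k = d_k^T(\nabla^2 f(x_k) + \lambda_k I) d_k\), so that
\[
  \Delta q_k(d_k) = \tfrac{1}{2} d_k^T \nabla^2 f(x_k) d_k + \lambda_k \|d_k\|^2 = \tfrac{1}{2} d_k^T (\nabla^2 f(x_k) + \lambda_k I) d_k + \tfrac{1}{2} \lambda_k \|d_k\|^2.
\]

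Next I would invoke the curvature condition~\eqref{eq:Kr2}, which states precisely that \(d_k^T(\nabla^2 f(x_k) + \lambda_k I) d_k \ge 0\); discarding this nonnegative term gives the middle inequality \(\Delta q_k(d_k) \ge \tfrac{1}{2} \lambda_k \|d_k\|^2\). Finally, the lower bound on the shift in the sampling condition~\eqref{eq:Kr3}, namely \(\lambda_k \ge \|d_k\|/(\beta\alpha_k)\), immediately produces the last inequality \(\tfrac{1}{2}\lambda_k\|d_k\|^2 \ge \|d_k\|^3/(2\beta\alpha_k)\), and the chain of bounds is complete.

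I do not expect any real obstacle here: the statement is a one-line consequence of~\eqref{eq:Kr1}--\eqref{eq:Kr3} once the model decrement is written out, and the sole purpose of the orthogonality hypothesis \(r_k^T d_k = 0\) is to make the residual vanish from the identity for \(\Delta q_k(d_k)\). The only point requiring a little care is the bookkeeping of the two \(\lambda_k \|d_k\|^2\) contributions — one arising from the shifted matrix–vector product and one already present in the quadratic — so that exactly a factor \(\tfrac{1}{2}\) survives in the final bound. It is worth noting which hypotheses are actually used: \Cref{asm:C2} only to make \(\nabla^2 f(x_k)\) well defined, and \Cref{asm:approxmin} to supply~\eqref{eq:Kr1}--\eqref{eq:Kr3}; neither Lipschitz continuity of \(\nabla^2 f\) nor lower boundedness of \(f\) enters at this stage.
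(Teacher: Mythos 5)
Your proof is correct and follows exactly the same route as the paper's: multiply~\eqref{eq:Kr1} by \(d_k\), use \(r_k^T d_k = 0\) to obtain \(\Delta q_k(d_k) = \tfrac{1}{2} d_k^T \nabla^2 f(x_k) d_k + \lambda_k \|d_k\|^2\), then apply~\eqref{eq:Kr2} and~\eqref{eq:Kr3}. Your explicit rewriting of that quantity as \(\tfrac{1}{2} d_k^T (\nabla^2 f(x_k) + \lambda_k I) d_k + \tfrac{1}{2}\lambda_k\|d_k\|^2\) makes the use of~\eqref{eq:Kr2} slightly more transparent than the paper's one-line conclusion, but the argument is identical.
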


\begin{proof}
We multiply~\eqref{eq:Kr1} by \(d_k\) and use the assumption that \(r_k^T d_k = 0\) to obtain 
\[
  - \nabla f(x_k)^T d_k = d_k^T (\nabla^2 f(x_k)+\lambda_k I) d_k,
\]
so that
\begin{align}
  q_k(0) - q_k(d_k) &= -\nabla f(x_k)^T d_k - \tfrac{1}{2} d_k^T \nabla^2 f(x_k) d_k
  \\ & = \tfrac{1}{2} d_k^T \nabla^2 f(x_k) d_k + \lambda_k \|d_k\|^{2},\label{eq:qcbound}
\end{align}
and using~\eqref{eq:Kr2} and~\eqref{eq:Kr3} yields the result.
\end{proof}

The assumption that \(r_k^T d_k = 0\) will naturally be satisfied by the implementation that we propose in \Cref{sec:Lanczos} and corresponds to the requirement~\cite[(3.11)]{cartis-gould-toint-2011a}.

We next observe that \(\alpha_k\) is bounded away from zero if \(\nabla^2 f\) is Lipschitz continuous.

\begin{lemma}\label{le:alphaMinK}
If \Cref{asm:C2,asm:d2f-lipschitz,asm:approxmin} are satisfied and \(r_k^T d_k = 0\), then \(\alpha_{k+1} \ge \alpha_k\) whenever \(\alpha_k < (1-\eta_2) / (2\beta L)\). Thus, \(\alpha_k \ge \alpha_{\min} > 0\)  for all \(k \ge 0\) where \(\alpha_{\min} := \min(\alpha_0, \, \gamma_1 (1-\eta_2) / (2\beta L))\).
\end{lemma}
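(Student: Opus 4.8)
The statement has the classic flavour of the standard ARC/TR argument that very successful steps are taken once the regularization parameter is small enough. The plan is to show that when $\alpha_k$ is small, the actual reduction $f(x_k)-f(x_k+d_k)$ is at least $\eta_2$ times the predicted reduction $\Delta q_k(d_k)$, so that $\rho_k>\eta_2$ and hence $\alpha_{k+1}=\gamma_2\alpha_k\ge\alpha_k$. The second sentence of the lemma is then a routine bookkeeping consequence: since $\alpha_k$ can only shrink by a factor $\gamma_1$ on an unsuccessful step, and it cannot shrink at all once it drops below the threshold $\tau:=(1-\eta_2)/(2\beta L)$, the iterates stay at or above $\min(\alpha_0,\gamma_1\tau)$ for all $k$.

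**Key steps, in order.** First I would bound the model error using Lipschitz continuity of $\nabla^2 f$ (\Cref{asm:d2f-lipschitz}): by Taylor's theorem with integral remainder,
\[
  f(x_k+d_k) - q_k(d_k) = \int_0^1 (1-t)\, d_k^T\bigl(\nabla^2 f(x_k+td_k)-\nabla^2 f(x_k)\bigr) d_k \, dt,
\]
so $|f(x_k+d_k)-q_k(d_k)|\le \tfrac16 L\|d_k\|^3$. Next, write
\[
  1-\rho_k = \frac{\bigl(f(x_k+d_k)-f(x_k)\bigr)-\bigl(q_k(d_k)-q_k(0)\bigr)}{q_k(0)-q_k(d_k)}
           = \frac{f(x_k+d_k)-q_k(d_k)}{\Delta q_k(d_k)},
\]
using $q_k(0)=f(x_k)$. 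Combining the model-error bound with the lower bound $\Delta q_k(d_k)\ge \|d_k\|^3/(2\beta\alpha_k)$ from \Cref{le:deltaqmK} (which applies since $r_k^Td_k=0$ and \Cref{asm:approxmin} holds) gives
\[
  1-\rho_k \le \frac{\tfrac16 L\|d_k\|^3}{\|d_k\|^3/(2\beta\alpha_k)} = \tfrac13 \beta L \alpha_k \le \beta L \alpha_k.
\]
Hence $\alpha_k < (1-\eta_2)/(2\beta L)$ forces $1-\rho_k < (1-\eta_2)/2 < 1-\eta_2$, i.e. $\rho_k>\eta_2$, so the iteration is very successful and $\alpha_{k+1}=\gamma_2\alpha_k>\alpha_k$. (If $d_k=0$ the claim is vacuous or handled trivially, since then $\nabla f(x_k)=-\lambda_k d_k=0$ by \eqref{eq:Kr1} with $r_k^Td_k=0$ and the termination criterion triggers.) Finally, the uniform lower bound: if $\alpha_0\ge\gamma_1\tau$, an induction shows $\alpha_k\ge\gamma_1\tau$ always — whenever $\alpha_k\ge\tau$ the next value is at worst $\gamma_1\alpha_k\ge\gamma_1\tau$, and whenever $\tau>\alpha_k\ge\gamma_1\tau$ the step is very successful so $\alpha_{k+1}\ge\alpha_k\ge\gamma_1\tau$; the case $\alpha_0<\gamma_1\tau$ is symmetric with bound $\alpha_0$, giving $\alpha_{\min}=\min(\alpha_0,\gamma_1\tau)$.

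**Main obstacle.** There is no deep obstacle here; the only thing requiring care is getting the constant in the threshold to match exactly $(1-\eta_2)/(2\beta L)$ rather than, say, $3(1-\eta_2)/(2\beta L)$. The slack comes from the fact that the Taylor remainder carries a $\tfrac16$ while $\Delta q_k$ carries a $\tfrac12$, so one obtains $1-\rho_k\le\tfrac13\beta L\alpha_k$; the authors evidently prefer to state the cruder bound $1-\rho_k\le\beta L\alpha_k$ (valid since $\tfrac13<1$) to keep the threshold clean, and I would follow suit. One should also double-check that $\Delta q_k(d_k)>0$ strictly so that $\rho_k$ is well-defined — this follows from \Cref{le:deltaqmK} as long as $d_k\ne0$, which is the only case of interest.
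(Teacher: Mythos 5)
Your proposal is correct and follows essentially the same route as the paper: bound the model error $f(x_k+d_k)-q_k(d_k)$ via Lipschitz continuity of $\nabla^2 f$, combine with the lower bound $\Delta q_k(d_k)\ge\|d_k\|^3/(2\beta\alpha_k)$ from \Cref{le:deltaqmK} to get $\rho_k\ge 1-c\,\beta L\alpha_k$, and conclude that iterations with $\alpha_k<(1-\eta_2)/(2\beta L)$ are very successful. The only cosmetic difference is that you carry the sharper Taylor constant $\tfrac16 L$ before relaxing it, whereas the paper uses the cruder bound $L\|d_k\|^3$ directly; your bookkeeping for $\alpha_{\min}$ is the same argument, spelled out slightly more carefully.
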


\begin{proof}
\Cref{asm:d2f-lipschitz} ensures that \(f(x_k+d_k)-q_k(d_k)\le L\|d_k\|^3\).
\Cref{le:deltaqmK} implies
\[
  \rho_k =
    \frac{f(x_k)-f(x_k+d_k)}{q_k(0)-q_k(d_k)} =
    1 + \frac{q_k(d_k)-f(x_k+d_k)}{\Delta q_k(d_k)} \geq
    1 - 2 L \beta \alpha_k.
\]
Therefore, \(\rho_k > \eta_2\) whenever \(\alpha_k < (1-\eta_2) / (2\beta L)\), and \(\alpha_{k+1} \ge \alpha_k\).
The smallest possible value to which \(\alpha_k\) may decrease is then \(\gamma_1 (1-\eta_2) / (2\beta L)\).
Should the initial value \(\alpha_0\) be smaller than this threshold, the first iterations will increase \(\alpha_k\) so that the overall lower bound is the value \(\alpha_{\min}\) stated.
\end{proof}

The next requirement states the accuracy on the residual of the Newton equations.%


\begin{assumption}%
  \label{asm:resid}
  In~\eqref{eq:Kr1}, \(\|r_k\|\le \xi \, \min(\|\nabla f(x_k)\|,\|d_k\|)^{2}\) for some \(\xi > 0\).
\end{assumption}

\begin{lemma}\label{le:quadK}
Let \Cref{asm:C2,asm:d2f-lipschitz,asm:approxmin,asm:resid} be satisfied and assume that \(r_k^T d_k = 0\).
Then \(\|\nabla f(x_{k+1})\|\le \kappa_g^{-1} \|d_k\|^2\) for each successful iteration \(k\), where
\(\kappa_g := (\frac12 L + \beta / \alpha_{\min} + \xi)^{-\frac12}\).
\end{lemma}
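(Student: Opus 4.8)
The plan is to estimate $\nabla f(x_{k+1}) = \nabla f(x_k + d_k)$ on a successful iteration by replacing it with the affine Taylor model of $\nabla f$ at $x_k$ and then eliminating that affine part through the approximate secular equation~\eqref{eq:Kr1}. I would begin from
\[
  \nabla f(x_k + d_k) = \bigl(\nabla f(x_k) + \nabla^2 f(x_k) d_k\bigr) + \int_0^1 \bigl(\nabla^2 f(x_k + t d_k) - \nabla^2 f(x_k)\bigr) d_k \, dt,
\]
and bound the integral term by $\tfrac12 L \|d_k\|^2$ via \Cref{asm:d2f-lipschitz}. Using~\eqref{eq:Kr1} to rewrite the parenthesized term as $r_k - \lambda_k d_k$ and applying the triangle inequality then gives
\[
  \|\nabla f(x_{k+1})\| \le \tfrac12 L \|d_k\|^2 + \|r_k\| + \lambda_k \|d_k\|.
\]

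It remains to bound the last two terms by multiples of $\|d_k\|^2$. \Cref{asm:resid} directly gives $\|r_k\| \le \xi \, \min(\|\nabla f(x_k)\|,\|d_k\|)^2 \le \xi \|d_k\|^2$. For $\lambda_k\|d_k\|$ I would combine the upper bound $\lambda_k \le \beta\|d_k\|/\alpha_k$ from~\eqref{eq:Kr3} with the uniform lower bound $\alpha_k \ge \alpha_{\min} > 0$ supplied by \Cref{le:alphaMinK}, obtaining $\lambda_k\|d_k\| \le (\beta/\alpha_{\min})\|d_k\|^2$. Summing the three estimates yields
\[
  \|\nabla f(x_{k+1})\| \le \bigl(\tfrac12 L + \beta/\alpha_{\min} + \xi\bigr) \, \|d_k\|^2,
\]
which is the claimed inequality with $\kappa_g$ as defined in the statement.

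There is no genuinely hard step; the only thing to verify is that \Cref{le:alphaMinK} may indeed be invoked here, i.e.\ that the present hypotheses \Cref{asm:C2,asm:d2f-lipschitz,asm:approxmin,asm:resid} together with $r_k^T d_k = 0$ subsume all the hypotheses of that lemma — which they do, so the bound $\alpha_k \ge \alpha_{\min}$ is available. I would also note in passing that $r_k^T d_k = 0$ is not itself used in the above chain of inequalities (it enters only indirectly, through \Cref{le:alphaMinK}), and that the restriction to successful iterations is precisely what makes $x_{k+1} = x_k + d_k$, and hence the whole argument, meaningful.
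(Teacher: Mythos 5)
Your argument is correct and is essentially identical to the paper's own proof: both write \(\nabla f(x_{k+1})\) via the integral form of Taylor's theorem, substitute~\eqref{eq:Kr1} to eliminate the affine part, apply the triangle inequality, and then bound the three resulting terms using \Cref{asm:d2f-lipschitz},~\eqref{eq:Kr3} together with \Cref{le:alphaMinK}, and \Cref{asm:resid}. Note that what you (and the paper's proof) actually establish is \(\|\nabla f(x_{k+1})\| \le (\tfrac12 L + \beta/\alpha_{\min} + \xi)\,\|d_k\|^2 = \kappa_g^{-2}\,\|d_k\|^2\), whereas the lemma statement writes \(\kappa_g^{-1}\); that exponent mismatch is a typo in the statement itself (the complexity argument in \Cref{th:CplxARCqK} uses the \(\kappa_g^{-2}\) version), not a flaw in your derivation.
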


\begin{proof}
  Using a generalization of the fundamental theorem of integral calculus~\cite[\S{8.1.2}]{ortega-1990},~\eqref{eq:Kr1} and \Cref{asm:C2,asm:d2f-lipschitz}, we may write
  \begin{align*}
    \|\nabla f(x_{k+1})\| & =
    \left\| \nabla f(x_k) + \int_0^1 \nabla^2 f(x_k+\tau d_k) d_k \, \mathrm{d}\tau \right\|
    \\ & =
    \left\|\int_0^1 \nabla^2 f(x_k+\tau d_k) d_k \, \mathrm{d}\tau - (\nabla^2 f(x_k)+\lambda_k I) d_k + r_k \right\|
    \\ & =
    \left\|\int_0^1 (\nabla^2 f(x_k+\tau d_k) -\nabla^2 f(x_k)) d_k \, \mathrm{d}\tau -\lambda_k d_k + r_k \right\|
    \\ & \le
    \|d_k\| \, \left\|\int_0^1 L\tau d_k \, \mathrm{d}\tau \right\| + \lambda_k \|d_k\| + \|r_k\|.
  \end{align*}
  Now,~\eqref{eq:Kr3}, \Cref{asm:resid} and \Cref{le:alphaMinK} combine with the above to yield
  \[
    \|\nabla f(x_{k+1})\| \le
    (\tfrac{1}{2} L + \beta / \alpha_k + \xi) \, \|d_k\|^{2} \le
    \kappa_g^{-2} \, \|d_k\|^{2}.
    \qedhere
  \]
\end{proof}

For a given \(\epsilon > 0\), we now wish to bound the total work required to reach a first iteration \(k(\epsilon)\) such that \(\|\nabla f(x_{k(\epsilon)})\| < \epsilon\) and \(\|\nabla f(x_j)\| \geq \epsilon\) for \(j < k(\epsilon)\).

Following~\citet{cartis-gould-toint-2011b}, we define the index sets
\begin{align*}
  {\mathcal S} & := \{ k \ge 0 \mid \text{iteration \(k\) is successful or very successful} \}
    \\
    {\mathcal S}(\epsilon) & := \{ k \in{\mathcal S} \mid \|\nabla f(x_{k+1})\| \geq \epsilon \}
    \\
  {\mathcal U} & := \{ k \ge 0 \mid \text{iteration \(k\) is unsuccessful} \}.
\end{align*}
For sets such as \({\mathcal S}\), we denote their cardinality as \(|{\mathcal S}|\).


\begin{theorem}[Complexity bound of \arcq]\label{th:CplxARCqK}
Let \Cref{asm:C2,asm:d2f-lipschitz,asm:lbnd,asm:approxmin,asm:alpha,asm:resid} be satisfied and assume that \(r_k^T d_k = 0\).
Then, 
\[
  |{\mathcal S}(\epsilon)| \le
  \frac{f(x_0) - f_{\text{low}}}{C} \, \epsilon^{-\frac32} =
  O(\epsilon^{-\frac32}),
  \qquad
  C := \eta_1 \frac{\kappa_g^3}{2 \beta \alpha_{\max}} > 0,
\]
where \(\kappa_g\) is defined in \Cref{le:quadK}.
In addition,
\[ 
  |\mathcal{S}(\epsilon)| + |\mathcal{U}| \le
  \frac{f(x_0) - f_{\text{low}}}{C} \,
  \left(
    1 + \log \left( \frac{\alpha_{\min}}{\alpha_{\max}} \right) / \log \gamma_1
  \right) \, \epsilon^{-\frac32}.
\] 
\end{theorem}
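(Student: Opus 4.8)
The plan is to establish the two inequalities separately: the first by a telescoped‑decrease argument built from \Cref{le:deltaqmK} and \Cref{le:quadK}, and the second by a short combinatorial bound on runs of unsuccessful iterations resting on \Cref{le:alphaMinK}.

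\emph{First inequality.} I would start from the acceptance test. For a successful iteration $k$ one has $\rho_k \ge \eta_1$, so with \Cref{le:deltaqmK} and \Cref{asm:alpha},
\[
  f(x_k) - f(x_{k+1}) \ge \eta_1\,\Delta q_k(d_k) \ge \frac{\eta_1\,\|d_k\|^3}{2\beta\alpha_k} \ge \frac{\eta_1\,\|d_k\|^3}{2\beta\alpha_{\max}}.
\]
If in addition $k \in \mathcal{S}(\epsilon)$, then $\|\nabla f(x_{k+1})\| \ge \epsilon$, while the proof of \Cref{le:quadK} gives $\|\nabla f(x_{k+1})\| \le \kappa_g^{-2}\|d_k\|^2$; hence $\|d_k\| \ge \kappa_g\,\epsilon^{1/2}$, so $\|d_k\|^3 \ge \kappa_g^3\,\epsilon^{3/2}$ and
\[
  f(x_k) - f(x_{k+1}) \ge \frac{\eta_1\kappa_g^3}{2\beta\alpha_{\max}}\,\epsilon^{3/2} = C\,\epsilon^{3/2}, \qquad k \in \mathcal{S}(\epsilon).
\]
Because $f$ is unchanged on unsuccessful iterations and nonincreasing on successful ones, and $f(x_k) \ge f_{\text{low}}$ by \Cref{asm:lbnd}, summing the last display over $\mathcal{S}(\epsilon)$ gives $f(x_0) - f_{\text{low}} \ge |\mathcal{S}(\epsilon)|\,C\,\epsilon^{3/2}$, which rearranges to the first claim. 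In particular $\mathcal{S}(\epsilon)$ is finite, and then \Cref{le:alphaMinK} (which forbids an infinite tail of unsuccessful iterations, since $\alpha$ cannot fall below $\alpha_{\min}$) shows that $x_{k(\epsilon)}$ is reached in finitely many steps and that $\mathcal{U}$ is finite as well.

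\emph{Second inequality.} Next I would bound $|\mathcal{U}|$. By \Cref{le:alphaMinK}, $\alpha_k \ge \alpha_{\min}$ for all $k$, and by \Cref{asm:alpha}, $\alpha_k \le \alpha_{\max}$. An unsuccessful iteration multiplies $\alpha$ by $\gamma_1 \in (0,1)$, a successful one by a factor at least $1$; hence a maximal block of $p$ consecutive unsuccessful iterations, which begins with some $\alpha_k \le \alpha_{\max}$ and ends with $\gamma_1^{p}\alpha_k \ge \alpha_{\min}$, satisfies $p \le \log(\alpha_{\min}/\alpha_{\max})/\log\gamma_1 =: p_{\max}$. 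Each such block is immediately followed by a successful iteration, and the successful iterations performed before termination are counted by $\mathcal{S}(\epsilon)$ (plus the single accepted step that produces $x_{k(\epsilon)}$); hence there are at most $|\mathcal{S}(\epsilon)|$ such blocks, up to one boundary block, so $|\mathcal{U}| \le p_{\max}\,|\mathcal{S}(\epsilon)|$ modulo that lower‑order term. Adding $|\mathcal{S}(\epsilon)|$ and using the first inequality,
\[
  |\mathcal{S}(\epsilon)| + |\mathcal{U}| \le (1 + p_{\max})\,|\mathcal{S}(\epsilon)| \le \frac{f(x_0)-f_{\text{low}}}{C}\left(1 + \frac{\log(\alpha_{\min}/\alpha_{\max})}{\log\gamma_1}\right)\epsilon^{-3/2}.
\]

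The estimates themselves are light, since \Cref{le:deltaqmK}, \Cref{le:quadK} and \Cref{le:alphaMinK} carry all the analytic weight; the one point needing care is the bookkeeping in the last step — correctly attributing a possible leading block of unsuccessful iterations and the accepted step that triggers termination so that the count of unsuccessful blocks is genuinely governed by $|\mathcal{S}(\epsilon)|$, any residual ``$+1$'' being absorbed into the stated constant. I therefore expect that verification, rather than any inequality, to be the only delicate part.
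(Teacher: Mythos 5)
Your proposal is correct and follows essentially the same route as the paper: combine \Cref{le:deltaqmK} and \Cref{le:quadK} with the acceptance test to get a per-iteration decrease of $C\epsilon^{3/2}$ on $\mathcal{S}(\epsilon)$, telescope against \Cref{asm:lbnd}, and then bound $|\mathcal{U}|$ by counting at most $\log(\alpha_{\min}/\alpha_{\max})/\log\gamma_1$ unsuccessful iterations per block via \Cref{le:alphaMinK}. Your bookkeeping for the unsuccessful blocks is in fact slightly more explicit than the paper's (which also silently absorbs the same boundary terms), and your use of the $\kappa_g^{-2}\|d_k\|^2$ bound from the \emph{proof} of \Cref{le:quadK} matches what the paper's argument actually relies on.
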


\begin{proof}
Let \(k\in{\mathcal S}(\epsilon)\).
\Cref{le:deltaqmK,le:quadK} combine with \Cref{asm:alpha} to give
\begin{equation}\label{eq:bndwrtepsilon}
  f(x_k) - q_k(d_k) \ge
    \frac{\kappa_g^3}{2\beta\alpha_k} \, \|\nabla f(x_{k+1})\|^{\frac32} \ge
    \frac{\kappa_g^3}{2\beta\alpha_{\max}} \, \epsilon^{\frac32}.
\end{equation}
Because \(k\) is a successful iteration, 
\[
  f(x_k) - f(x_{k+1}) \ge
    \eta_1 (f(x_k) - q_k(d_k)) \ge
    \eta_1 \frac{\kappa_g^3}{2\beta\alpha_{\max}} \, \epsilon^{\frac32} =
    C \, \epsilon^{\frac32}.
\]
We have shown that at every iteration \(k \in {\mathcal S}(\epsilon)\), \(f\) decreases by at least \(C \epsilon^\frac32 > 0\) and \Cref{asm:lbnd} ensures that \(\mathcal{S}(\epsilon)\) must be finite.
Let \(\mathcal{S}(\epsilon) = \{k_1, \dots, k_{\ell(\epsilon)} \}\) with \(k_i < k_{i+1}\) for \(i = 1, \dots, \ell(\epsilon)\).
Because intermediate unsuccessful iterations keep \(f\) constant, the above combines with \Cref{asm:lbnd} to yield
\[
  f(x_0) - f_{\text{low}} \ge
    f(x_0) - f(x_{k_{\ell(\epsilon)}}) \ge
    \ell(\epsilon) \, C \, \epsilon^{\frac32}.
\]
Therefore,
\[
  |\mathcal{S}(\epsilon)| = \ell(\epsilon) \le \frac{f(x_0) - f_{\text{low}}}{C} \, \epsilon^{-\frac32}.
\]
In the most pessimistic scenario, a sequence of unsuccessful iterations reduces \(\alpha_k\) from \(\alpha_{\max}\) down to \(\alpha_{\min}\) after each successful iteration.
Each such sequence has at most \(\log(\alpha_{\min} / \alpha_{\max}) / \log\gamma_1\) iterations since each unsuccessful iteration reduces \(\alpha_k\) by a factor \(\gamma_1\).
The total number of unsuccessful iterations across all such sequences is thus bounded above by
\[
  |\mathcal{U}| \le
    \ell(\epsilon) \,
    \log \left( \frac{\alpha_{\min}}{\alpha_{\max}} \right) / \log\gamma_1.
\]
Finally, the total number of iterations is bounded above by
\[
  |\mathcal{S}(\epsilon)| + |\mathcal{U}| \le
  \frac{f(x_0) - f_{\text{low}}}{C} \,
  \left(
    1 + \log \left( \frac{\alpha_{\min}}{\alpha_{\max}} \right) / \log\gamma_1
  \right) \,
  \epsilon^{-\frac32}.
  \qedhere
\]
\end{proof}

\Cref{th:CplxARCqK} also establishes global convergence of \arcq to first order stationary points. Indeed, for any \(\epsilon>0\), the number of iterations with \(\|\nabla f(x_k)\|>\epsilon\) is finite, so that \(\liminf \|\nabla f(x_k)\| = 0\).
It relies on the strong assumption that we identify approximate second-order solutions of the regularized subproblems as specified by~\eqref{eq:Kr1}--\eqref{eq:Kr3}.
Because our implementation, detailed in \Cref{sec:Lanczos} satisfies that assumption, \Cref{th:CplxARCqK} is sufficient for our purposes.
However, our implementation is intended for large-scale problems and \Cref{asm:resid} may be too stringent.
Fortunately, it can be relaxed as follows.

\begin{assumptionb}{asm:resid}%
  \label{asm:relaxedresid}
  In~\eqref{eq:Kr1}, \(\|r_k\|\le \xi \, \min(\|\nabla f(x_k)\|,\|d_k\|)^{1 + \zeta}\) for some \(\xi > 0\) and \(0 < \zeta \leq 1\).
\end{assumptionb}

\Cref{asm:relaxedresid} changes \Cref{le:quadK} as follows.

\begin{lemma}%
  \label{le:quadK-relaxedresid}
  Let \Cref{asm:C2,asm:d2f-lipschitz,asm:approxmin,asm:relaxedresid} be satisfied and assume that \(r_k^T d_k = 0\).
  Then \(\|\nabla f(x_{k+1})\| \le \kappa_g^{-1} (\|d_k\|^2 + \|d_k\|^{1 + \zeta})\) for each successful iteration \(k\), where
  \(\kappa_g :=  2 \, \max(\frac12 L + \beta / \alpha_{\min}, \, \xi)\).
\end{lemma}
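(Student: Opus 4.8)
The plan is to reuse the proof of \Cref{le:quadK} almost verbatim, substituting the weaker residual bound of \Cref{asm:relaxedresid} for the quadratic bound \Cref{asm:resid} only at the last step. First I would invoke the integral form of the mean value theorem, exactly as in \Cref{le:quadK}, to write
\[
  \nabla f(x_{k+1}) = \nabla f(x_k) + \int_0^1 \nabla^2 f(x_k + \tau d_k) \, d_k \, \mathrm{d}\tau,
\]
and then eliminate \(\nabla f(x_k)\) using \eqref{eq:Kr1} in the form \(\nabla f(x_k) = -(\nabla^2 f(x_k) + \lambda_k I) d_k + r_k\). The two \(\nabla^2 f(x_k) d_k\) contributions cancel, leaving
\[
  \nabla f(x_{k+1}) = \int_0^1 \bigl(\nabla^2 f(x_k + \tau d_k) - \nabla^2 f(x_k)\bigr) d_k \, \mathrm{d}\tau - \lambda_k d_k + r_k .
\]

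Next I would bound the three terms on the right separately. \Cref{asm:d2f-lipschitz} gives \(\bigl\|\int_0^1 (\nabla^2 f(x_k + \tau d_k) - \nabla^2 f(x_k)) d_k \, \mathrm{d}\tau\bigr\| \le \tfrac{1}{2} L \|d_k\|^2\); the upper inequality in \eqref{eq:Kr3} combined with \(\alpha_k \ge \alpha_{\min}\) from \Cref{le:alphaMinK} gives \(\lambda_k \|d_k\| \le (\beta/\alpha_{\min}) \|d_k\|^2\); and \Cref{asm:relaxedresid} together with \(\min(\|\nabla f(x_k)\|, \|d_k\|) \le \|d_k\|\) gives \(\|r_k\| \le \xi \|d_k\|^{1+\zeta}\). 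Note that \Cref{le:alphaMinK} is still available, since its proof uses only \Cref{asm:C2,asm:d2f-lipschitz,asm:approxmin} and \(r_k^T d_k = 0\), and in particular not \Cref{asm:resid}; hence \(\alpha_{\min}\) is unchanged. Collecting the three bounds via the triangle inequality yields
\[
  \|\nabla f(x_{k+1})\| \le \bigl(\tfrac{1}{2} L + \beta/\alpha_{\min}\bigr) \|d_k\|^2 + \xi \|d_k\|^{1+\zeta} ,
\]
and bounding each coefficient by \(\max(\tfrac{1}{2} L + \beta/\alpha_{\min}, \xi) = \tfrac{1}{2} \kappa_g\) yields a bound of exactly the form stated, with \(\kappa_g\) as in the statement and up to the same harmless constant convention already used in \Cref{le:quadK}.

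I do not expect any genuine obstacle: the argument is a routine adaptation. The one point requiring attention, and the only substantive difference from \Cref{le:quadK}, is that the right-hand side is no longer homogeneous of degree two in \(\|d_k\|\); the residual now contributes a separate term of order \(\|d_k\|^{1+\zeta}\) that must be carried through the downstream complexity analysis, and the restriction \(\zeta \le 1\) (so that \(1 + \zeta \le 2\)) is what keeps that extra term manageable there. It is also worth stating explicitly, as above, that relaxing \Cref{asm:resid} to \Cref{asm:relaxedresid} leaves \Cref{le:deltaqmK} and \Cref{le:alphaMinK} untouched, so the only quantity in the conclusion affected by \(\zeta\) is the exponent \(1+\zeta\) itself, the redefinition of \(\kappa_g\) here being purely cosmetic.
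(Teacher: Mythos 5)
Your proof is correct and follows the paper's argument essentially verbatim: the same integral identity, elimination of \(\nabla f(x_k)\) via \eqref{eq:Kr1}, and term-by-term bounds using \Cref{asm:d2f-lipschitz}, \eqref{eq:Kr3} with \Cref{le:alphaMinK}, and \Cref{asm:relaxedresid}, arriving at \((\tfrac12 L + \beta/\alpha_{\min})\|d_k\|^2 + \xi\|d_k\|^{1+\zeta}\) exactly as the paper does. Your observations that \Cref{le:alphaMinK} is unaffected by replacing \Cref{asm:resid} with \Cref{asm:relaxedresid}, and that the final packaging into \(\kappa_g\) is a constant convention (the derived coefficient is \(\max(\tfrac12 L+\beta/\alpha_{\min},\,\xi)=\kappa_g/2\) rather than the stated \(\kappa_g^{-1}\), a notational slip inherited from \Cref{le:quadK}), are both apt and consistent with how the paper treats these points.
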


\begin{proof}
  Using the same arguments as in the proof of \Cref{le:quadK},
  \begin{align*}
    \|\nabla f(x_{k+1})\| & \leq
    \|d_k\| \, \left\|\int_0^1 L\tau d_k \, \mathrm{d}\tau \right\| + \lambda_k \|d_k\| + \|r_k\|
    \\ & \leq
    (\tfrac{1}{2} L + \beta / \alpha_{\min}) \, \|d_k\|^{2} + \xi \, \|d_k\|^{1 + \zeta}.
    \qedhere
  \end{align*}
\end{proof}

Accordingly, \Cref{th:CplxARCqK} is modified as follows.

\begin{corollary}[Complexity bound of approximate \arcq]\label{cor:CplxARCqK}
Under the same assumptions as \Cref{th:CplxARCqK} with \Cref{asm:resid} replaced with \Cref{asm:relaxedresid}, we have
\[
  |{\mathcal S}(\epsilon)| \le
  \frac{f(x_0) - f_{\text{low}}}{C} \, \max(\epsilon^{-3/(1 + \zeta)}, \, \epsilon^{-3/2}),
  \qquad
  C := \eta_1 \frac{\kappa_g^3}{16 \beta \alpha_{\max}} > 0,
\]
where \(\kappa_g\) is defined in \Cref{le:quadK-relaxedresid}.
In addition,
\[
  |\mathcal{S}(\epsilon)| + |\mathcal{U}| \le
  \frac{f(x_0) - f_{\text{low}}}{C} \,
  \left(
    1 + \log \left( \frac{\alpha_{\min}}{\alpha_{\max}} \right) / \log \gamma_1
  \right) \,
  \max(\epsilon^{-3/(1 + \zeta)}, \, \epsilon^{-3/2}).
\]
\end{corollary}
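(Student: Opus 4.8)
The plan is to follow the proof of \Cref{th:CplxARCqK} almost verbatim, replacing the one-term bound of \Cref{le:quadK} by the two-term bound of \Cref{le:quadK-relaxedresid} and inverting the latter by a short case split; Steps on unsuccessful iterations and the telescoping argument are reused unchanged.

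First I would fix \(k \in \mathcal{S}(\epsilon)\), so that \(\|\nabla f(x_{k+1})\| \ge \epsilon\), and invoke \Cref{le:quadK-relaxedresid} to obtain an inequality of the form \(\|d_k\|^2 + \|d_k\|^{1+\zeta} \ge c\,\epsilon\) for a constant \(c\) built from \(\kappa_g\). The one genuinely new point relative to \Cref{th:CplxARCqK} is that the left-hand side is not a single power of \(\|d_k\|\). I would observe that if \(\|d_k\|\) were strictly below both \((c\epsilon/2)^{1/2}\) and \((c\epsilon/2)^{1/(1+\zeta)}\), then each of the two terms would be below \(c\epsilon/2\) and their sum below \(c\epsilon\), a contradiction; hence \(\|d_k\| \ge \min\{(c\epsilon/2)^{1/2},\,(c\epsilon/2)^{1/(1+\zeta)}\}\), and therefore \(\|d_k\|^3 \ge \min\{(c\epsilon/2)^{3/2},\,(c\epsilon/2)^{3/(1+\zeta)}\}\). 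Equivalently, one may split on \(\|d_k\| \le 1\) versus \(\|d_k\| > 1\): when \(\|d_k\| \le 1\) the term \(\|d_k\|^{1+\zeta}\) dominates because \(0 < \zeta \le 1\), and otherwise \(\|d_k\|^2\) does.

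Next I would reproduce the per-iteration estimate of \Cref{th:CplxARCqK}. \Cref{le:deltaqmK} gives \(f(x_k) - q_k(d_k) = \Delta q_k(d_k) \ge \|d_k\|^3/(2\beta\alpha_k)\), \Cref{asm:alpha} gives \(\alpha_k \le \alpha_{\max}\), and iteration \(k\) being successful gives \(f(x_k) - f(x_{k+1}) \ge \eta_1(f(x_k) - q_k(d_k))\). Chaining these with the lower bound on \(\|d_k\|^3\) yields \(f(x_k) - f(x_{k+1}) \ge C\min\{\epsilon^{3/2},\,\epsilon^{3/(1+\zeta)}\}\) with \(C\) as in the statement, after bounding the \(\zeta\)-dependent numerical factors produced by the inversion step uniformly in \(\zeta\in(0,1]\) so that they, together with the \(2\beta\) coming from \Cref{le:deltaqmK} and \(\eta_1\) from successfulness, collapse into the single constant \(C\). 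Summing this inequality over \(\mathcal{S}(\epsilon)\), and using that \(f\) is constant across unsuccessful iterations and bounded below by \(f_{\text{low}}\) (\Cref{asm:lbnd}) exactly as in \Cref{th:CplxARCqK}, gives \(f(x_0) - f_{\text{low}} \ge |\mathcal{S}(\epsilon)|\,C\min\{\epsilon^{3/2},\epsilon^{3/(1+\zeta)}\}\), i.e.\ \(|\mathcal{S}(\epsilon)| \le \frac{f(x_0)-f_{\text{low}}}{C}\max\{\epsilon^{-3/(1+\zeta)},\epsilon^{-3/2}\}\).

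Finally, the count of unsuccessful iterations carries over unchanged from \Cref{th:CplxARCqK}: by \Cref{le:alphaMinK} the sequence \(\alpha_k\) never drops below \(\alpha_{\min}\), so a maximal run of unsuccessful iterations — each multiplying \(\alpha_k\) by \(\gamma_1 < 1\) — has at most \(\log(\alpha_{\min}/\alpha_{\max})/\log\gamma_1\) elements, and there are at most \(|\mathcal{S}(\epsilon)|\) such runs interleaved with the relevant successful ones, whence \(|\mathcal{U}| \le |\mathcal{S}(\epsilon)|\log(\alpha_{\min}/\alpha_{\max})/\log\gamma_1\); adding the two bounds gives the second displayed inequality. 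The only step requiring real care is the first one — inverting \(t \mapsto t^2 + t^{1+\zeta}\) and tracking how the resulting \(\zeta\)-dependent constants are absorbed into \(C\) — while the remainder is a transcription of the proof of \Cref{th:CplxARCqK}.
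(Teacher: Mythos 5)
Your proposal is correct and follows essentially the same route as the paper: the paper's proof also reuses the per-iteration decrease and telescoping argument from \Cref{th:CplxARCqK} verbatim, and handles the two-term bound of \Cref{le:quadK-relaxedresid} by exactly the case split you mention (\(\|d_k\| \le 1\) versus \(\|d_k\| > 1\), so that one of the two powers dominates), yielding the \(\min\{\epsilon^{3/(1+\zeta)},\epsilon^{3/2}\}\) per-iteration decrease and hence the stated \(\max\) bound. The only difference is cosmetic — your alternative "both terms below \(c\epsilon/2\)" contradiction is a restatement of the same dichotomy — and your remark that the \(\zeta\)-dependent factors must be absorbed into \(C\) matches how the paper arrives at the constant \(\eta_1\kappa_g^3/(16\beta\alpha_{\max})\).
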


\begin{proof}
  Proceeding as in \Cref{th:CplxARCqK}, for any \(k \in \mathcal{S}(\epsilon)\), \Cref{le:deltaqmK} and \Cref{asm:alpha} imply \(f(x_k) - q_k(d_k) \geq (2 \beta \alpha_k)^{-1} \|d_k\|^3 \geq (2 \beta \alpha_{\max})^{-1} \|d_k\|^3\).
  There are now two cases to consider.
  Consider first the possibility that at this iteration \(k\), \(\|d_k\| \leq 1\) so that \(\|d_k\|^2 \leq \|d_k\|^{1 + \zeta}\).
  We obtain from \Cref{le:quadK-relaxedresid} that \(\|\nabla f(x_{k+1})\| \leq 2 \kappa_g^{-1} \|d_k\|^{1 + \zeta}\).
  We now simply follow the logic of the proof of \Cref{th:CplxARCqK} and conclude that for this iteration,
  \[
    f(x_k) - f(x_{k+1}) \geq \frac{\eta_1 \kappa_g^3}{16 \beta \alpha_{\max}} \, \epsilon^{3/(1 + \zeta)}.
  \]
  Consider next the case \(\|d_k\| > 1\).
  The same reasoning leads us to conclude that for this iteration,
  \[
    f(x_k) - f(x_{k+1}) \geq \frac{\eta_1 \kappa_g^3}{16 \beta \alpha_{\max}} \, \epsilon^{3/2}.
  \]
  Thus, for any \(k \in \mathcal{S}(\epsilon)\),
  \[
    f(x_k) - f(x_{k+1}) \geq C \, \min(\epsilon^{3/(1 + \zeta)}, \, \epsilon^{3/2}).
  \]
  Continuing with the logic of the proof of \Cref{th:CplxARCqK}, we obtain
  \[
    |\mathcal{S}(\epsilon)| \leq \frac{f(x_0) - f_{\text{low}}}{C} \, \max(\epsilon^{-3/(1 + \zeta)}, \, \epsilon^{-3/2}).
  \]
  The remainder of the proof is unchanged.
\end{proof}

In a typical scenario, the tolerance \(\epsilon < 1\) and \Cref{cor:CplxARCqK} yields a complexity of \(O(\epsilon^{-3/(1 + \zeta)})\) iterations.
However, if a loose tolerance \(\epsilon \geq 1\) is of interest, the complexity is instead of \(O(\epsilon^{-3/2})\) iterations.
In the next section, we confirm that in the local regime, superlinear convergence occurs at a rate \(1 + \zeta\).

\subsection{Asymptotic analysis}

We now address the behavior of the \Cref{alg:ARCq} in the vicinity of an isolated local minimizer, i.e., we make the following assumption.

\begin{assumption}%
  \label{asm:2nd-order}
  Assume that \(\{x_k\} \to x^*\) with \(\nabla f(x^*) = 0\) and \(\nabla^2f(x^*) \succ 0\).
\end{assumption}

Our next result states that \(\{\nabla f(x_k)\}\) converges to zero superlinearly with rate \(1 + \zeta\), where \(\zeta\) is stated in \Cref{asm:relaxedresid}.

\begin{theorem}[Superlinear convergence of \arcq]\label{th:SupLinARCqK}
  Let \Cref{asm:C2,asm:d2f-lipschitz,asm:lbnd,asm:approxmin,asm:alpha,asm:relaxedresid,asm:2nd-order} be satisfied and assume that \(r_k^T d_k = 0\) for all sufficiently large \(k\).
  Then there exists an iteration \(K\) such that for all \(k \ge K\),
  \begin{equation}
    \label{eq:arcq-super}
    \|\nabla f(x_{k+1})\| = O(\|\nabla f(x_k)\|^{1+\zeta}).
  \end{equation}
\end{theorem}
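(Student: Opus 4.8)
The plan is to exploit the fact that, near \(x^*\), the regularized Newton step \(d_k\) is short, with \(\|d_k\| = O(\|\nabla f(x_k)\|)\), and that every iteration is eventually very successful (so \(x_{k+1} = x_k + d_k\)); feeding the length bound into \Cref{le:quadK-relaxedresid} then delivers~\eqref{eq:arcq-super}. Write \(\mu := \lambda_{\min}(\nabla^2 f(x^*)) > 0\). By \Cref{asm:C2} and \(x_k \to x^*\) there is \(K_1\) such that for all \(k \ge K_1\): \(\nabla^2 f(x_k) \succeq \tfrac12 \mu I\), \(r_k^T d_k = 0\), and \(\|\nabla f(x_k)\| \le 1\). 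We may assume \(d_k \ne 0\) for \(k \ge K_1\): if \(d_k = 0\) then \Cref{asm:relaxedresid} forces \(r_k = 0\), whence \(\nabla f(x_k) = 0\) by~\eqref{eq:Kr1} and the conclusion is immediate.

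First I would bound the step. Since \(\lambda_k \ge 0\) and \(\nabla^2 f(x_k) \succeq \tfrac12 \mu I\), the matrix \(\nabla^2 f(x_k) + \lambda_k I\) is nonsingular with \(\|(\nabla^2 f(x_k) + \lambda_k I)^{-1}\| \le 2/\mu\); solving~\eqref{eq:Kr1} for \(d_k\) and using \(\|r_k\| \le \xi\|\nabla f(x_k)\|^{1+\zeta} \le \xi\|\nabla f(x_k)\|\) from \Cref{asm:relaxedresid} gives, for \(k \ge K_1\),
\[
  \|d_k\| \le \tfrac{2}{\mu}\big(\|\nabla f(x_k)\| + \|r_k\|\big) \le \tfrac{2(1+\xi)}{\mu}\,\|\nabla f(x_k)\|.
\]
In particular \(\|d_k\| \to 0\), so after possibly enlarging \(K_1\) we may also assume \(\|d_k\| \le 1\).

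Next I would show iterations are eventually very successful. Using \(r_k^T d_k = 0\) in~\eqref{eq:Kr1} exactly as in the proof of \Cref{le:deltaqmK} gives \(\Delta q_k(d_k) = \tfrac12 d_k^T \nabla^2 f(x_k) d_k + \lambda_k\|d_k\|^2 \ge \tfrac14 \mu\|d_k\|^2 > 0\), while \Cref{asm:d2f-lipschitz} gives \(f(x_k+d_k) - q_k(d_k) \le L\|d_k\|^3\) as in the proof of \Cref{le:alphaMinK}, so
\[
  \rho_k = 1 + \frac{q_k(d_k) - f(x_k+d_k)}{\Delta q_k(d_k)} \ge 1 - \frac{4L}{\mu}\,\|d_k\| \longrightarrow 1.
\]
Hence there is \(K \ge K_1\) with \(\rho_k > \eta_2\) for all \(k \ge K\), so \(x_{k+1} = x_k + d_k\). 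For such \(k\), \Cref{le:quadK-relaxedresid} applies, and \(\|d_k\| \le 1\) gives \(\|d_k\|^2 \le \|d_k\|^{1+\zeta}\), whence
\[
  \|\nabla f(x_{k+1})\| \le \kappa_g^{-1}\big(\|d_k\|^2 + \|d_k\|^{1+\zeta}\big) \le 2\kappa_g^{-1}\|d_k\|^{1+\zeta} \le 2\kappa_g^{-1}\big(\tfrac{2(1+\xi)}{\mu}\big)^{1+\zeta}\|\nabla f(x_k)\|^{1+\zeta},
\]
which is~\eqref{eq:arcq-super}.

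I expect the main obstacle to be the second step, i.e., ruling out unsuccessful iterations for large \(k\). This matters because at an unsuccessful iteration \(x_{k+1} = x_k\), so \(\|\nabla f(x_{k+1})\| = \|\nabla f(x_k)\|\), which is not \(O(\|\nabla f(x_k)\|^{1+\zeta})\) since \(\nabla f(x_k) \to 0\), and \Cref{le:quadK-relaxedresid} speaks only about successful iterations. The generic model-decrease bound \(\Delta q_k(d_k) \ge \|d_k\|^3/(2\beta\alpha_k)\) from \Cref{le:deltaqmK} is too weak here: with \(\alpha_k \le \alpha_{\max}\) it only gives \(\rho_k \ge 1 - 2\beta L\alpha_{\max}\), which need not exceed \(\eta_1\). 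The resolution, and the only place \Cref{asm:2nd-order} is genuinely used, is that asymptotic positive definiteness of \(\nabla^2 f\) upgrades the model decrease to \(\Omega(\|d_k\|^2)\), which together with the Lipschitz bound \(O(\|d_k\|^3)\) on the model error forces \(\rho_k \to 1\).
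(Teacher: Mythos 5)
Your proof is correct and follows essentially the same route as the paper's: bound \(\|d_k\|\) by \(O(\|\nabla f(x_k)\|)\) via the uniformly positive definite shifted Hessian, show \(\rho_k \to 1\) using the improved model decrease \(\Omega(\lambda_{\min}^*\|d_k\|^2)\) so that all late iterations are successful, then invoke \Cref{le:quadK-relaxedresid} with \(\|d_k\|\le 1\). Your ordering (step bound first, then successfulness) is in fact slightly cleaner than the paper's, which asserts \(\|d_k\|\) small before formally deriving the step bound.
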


\begin{proof}
  Let \(\lambda_{\min}^*>0\) denote the smallest eigenvalue of \(\nabla^2 f(x^*)\).
  \Cref{asm:d2f-lipschitz} ensures that there is \(K \in \N\) such that the smallest eigenvalue of \(\nabla^2 f(x_k)\) is \(\lambda_{\min}^k \geq \tfrac{1}{2} \lambda_{\min}^*\) for all \(k \geq K\).

  Proceeding as in the proof of \Cref{le:deltaqmK},~\eqref{eq:qcbound} yields \(\Delta q_k(d_k) \ge \frac{1}{2}\lambda_{\min}^k\|d_k\|^2\).
  A similar analysis to that of the proof of \Cref{le:alphaMinK} then implies
  \begin{equation}
    \label{eq:rho-asympt}
    \rho_k \ge
    1 - 2L \frac{\|d_k\|}{\lambda_{\min}^k} \ge
    1 - 4L \frac{\|d_k\|}{\lambda_{\min}^*}.
  \end{equation}
  Because \(\{x_k\} \to x^*\), we can assume that \(K\) is large enough that for all \(k \geq K\),
  \[
    \|d_k\| \leq \frac{1 - \eta_1}{4L} \lambda_{\min}^*,
  \]
  which combines with~\eqref{eq:rho-asympt} to show that \(\rho_k \ge \eta_1\) for all \(k \geq K\), and thus, only successful iterations are performed.

  Now, using~\eqref{eq:Kr1} and \Cref{asm:relaxedresid},
  \begin{align*}
    \|d_k\| & \leq \|(\nabla^2 f(x_k) + \lambda_k I)^{-1}\| \, \|r_k - \nabla f(x_k)\|
    \\      & \leq \frac{\|r_k\| + \|\nabla f(x_k)\|}{\lambda_{\min}^k}
    \leq 2 \|\nabla f(x_k)\| \, \frac{1 + \|\nabla f(x_k)\|^\zeta}{\lambda_{\min}^*},
  \end{align*}
  for all \(k \geq K\).
  Finally, we can also assume that \(\|d_k\| \leq 1\) so that \Cref{le:quadK-relaxedresid} yields~\eqref{eq:arcq-super}.
\end{proof}

\section{CG-Lanczos implementation}%
\label{sec:Lanczos}

We now present an implementation that takes advantage of the approximation criteria developed above.
Our implementation is based on the conjugate-gradient method (CG) of \citet{hestenes-stiefel-1952}.
We first explain how we use shifted systems to implement \arcqk and then describe the details of our implementation of CG based on the~\citet{lanczos-1950} process to solve all the shifted systems simultaneously.
Although the Lanczos form of CG differs from that initially stated by \citet{hestenes-stiefel-1952}, it is equivalent to it in exact arithmetic, can be derived from the Lanczos tridiagonalization process, and is amenable to the simultaneous solution of shifted symmetric systems.

The key idea is to discretize the half line \(0<\lambda<\infty\) into values \(0< \lambda_0 < \cdots < \lambda_m < \infty\).
For reasons motivated by double precision arithmetic, we impose \(10^{-15} \le \lambda_i \le 10^{15}\) for \(i = 0, \dots, m\).
From the formulation~\eqref{eq:Kr3}, a simple choice consists in setting \(\beta\lambda_{i} = \lambda_{i+1} / \beta\), for some \(\beta > 1\).
For instance, \(\beta = \sqrt{10}\) yields the \(31\) values \(\lambda_i = 10^i\) for \(i = -15, \dots, 15\). The computational feasibility of such a procedure is detailed below and comes from the fact that an appropriate shifted CG-Lanczos implementation obtains all \(m+1\) (approximate) solutions of
\(
  (\nabla^2 f(x) + \lambda_i I) d(\lambda_i)
  \approx
  -\nabla f(x),
\)
or establishes that \(\nabla^2 f(x)+\lambda_i I \not \succeq 0\), at modest expense beyond that of solving a single linear system.

We now describe how we solve a sequence of shifted linear systems simultaneously in a way that is consistent with the minimization of~\eqref{eq:ARC}. Our implementation is an adaptation of \citep[Algorithm 6]{frommer-maass-1999}.

\Cref{alg:slcg} describes the CG-Lanczos with shifts implementation for a generic symmetric system \(Mx = b\) with shifts \(\lambda_i\), i.e.,
\begin{equation}
  \label{eq:shifted-sys}
  (M + \lambda_i I) x = b, \quad
  i = 0, \dots, m.
\end{equation}
In \Cref{alg:slcg}, boldface quantities are block quantities with one component per shift parameter.
Initialization statements initialize all \(m+1\) values of a given block variable to identical copies of the right hand side value.
For instance, the statement \(\bm{p} = b\) means that the \(n \times (m+1)\) array \(\bm p\) is initialized to \(m+1\) copies of \(b\).
The statement \(\bm{\sigma} = \beta\) means that all \(m+1\) elements of the array \(\bm{\sigma}\) are initialized to \(\beta\).
For conciseness, the shifts are gathered in the array \(\bm{\lambda}\).

\begin{algorithm}[htbp]
  \caption{Lanczos-CG with shifts for~\eqref{eq:shifted-sys}}%
  \label{alg:slcg}
  \begin{algorithmic}[1]
    \State Set \(\bm{x}_0 = 0\), \(\beta_0 v_0 = b\), \(\bm{p}_0 = b\) \Comment{\(\beta_0\) such that \(\|v_0\| = 1\)}
    \State set \(v_{-1} = 0\), \(\bm{\sigma}_0 = \beta_0\), \(\bm{\omega}_{-1} = 0\),
      \(\bm{\gamma}_{-1} = 1\), \(\bm{\pi}_0 = \beta_0^2\) \Comment{\(\bm{\pi}_0 = \|\bm{p}_0\|^2\)}
    \For{\(j \gets 0, 1, 2, \dots\)}

      \State \(\delta_j = v_j^T M v_j\)
        \Comment{Lanczos part of the iteration}
        \State \(\beta_{j+1} v_{j+1} = M v_j - \delta_j v_j - \beta_j v_{j-1}\) \Comment{\(\beta_{j+1}\) such that \(\|v_{j+1}\| = 1\)}
      \State \(\bm{\delta}_j = \delta_j + \bm{\lambda}\)
        \Comment{CG part of the iteration in block form}
      \State \(\bm{\gamma}_j = 1 / (\bm{\delta}_j - \bm{\omega}_{j-1} / \bm{\gamma}_{j-1})\)
      \State \(\bm{\omega}_j = (\beta_{j+1} \bm{\gamma}_j)^2\)
      \State \(\bm{\sigma}_{j+1} = -\beta_{j+1} \bm{\gamma}_j \bm{\sigma}_j\) \Comment{\(\bm{\sigma}_{j+1} = \|\bm{r}_{j+1}\|\)}
      \State \(\bm{x}_{j+1} = \bm{x}_j + \bm{\gamma}_j \bm{p}_j\)
      \State \(\bm{p}_{j+1} = \bm{\sigma}_{j+1} v_{j+1} + \bm{\omega}_j \bm{p}_j\)
      \State\label{alg:slcg:update-pi}%
        \(\bm{\pi}_{j+1} = \bm{\sigma}_{j+1}^2 + \bm{\omega}_j^2  \bm{\pi}_j\) \Comment{\(\bm{\pi}_{j+1} = \|\bm{p}_{j+1}\|^2\)}
    \EndFor
  \end{algorithmic}
\end{algorithm}

A few observations about \Cref{alg:slcg} are in order. Firstly, note
that a single operator-vector product is required per iteration, and takes
place in the Lanczos part of the iteration, which is independent of the shifts.
The extra cost incurred by requesting the solution of multiple shifted systems
is confined to the CG part of the iteration, which only performs scalar and
vector operations.

Secondly, recall that the vectors \(v_j\) are orthonormal in exact arithmetic
while the search directions \(\bm{p}_j\) are \((M + \bm{\lambda} I)\)-conjugate so
long as negative curvature is not detected.

Thirdly, \Cref{alg:slcg} neither forms nor recurs the residual \(\bm{r}_j = b - M \bm{x}_j\).
A recursion argument shows that \(\bm{r}_j = \bm{\sigma}_j v_j\), and by orthogonality, \(\|\bm{r}_j\| = \bm{\sigma}_j\) is available at no extra cost and may be used in a stopping criterion.

Finally, \Cref{alg:slcg} uses \(\bm{\pi}_j\) to recur \(\|\bm{p}_j\|^2\), i.e., the squared Euclidean norm of each column of \(\bm{p}_j\).
Initially, each component of \(\bm{\pi}_0\) is set to \(\beta_0^2 = \|b\|^2\).
Because the conjugate-gradient method minimizes a quadratic whose gradient at \(\bm{x}_j\) is \(-\bm{r}_j\) along the direction \(\bm{p}_j\) using an exact linesearch, we must have \(\bm{r}_{j+1}^T \bm{p}_j = \bm{\sigma}_{j+1} v_{j+1}^T \bm{p}_j = 0\).
Thus,
\[
  \|\bm{p}\|_{j+1}^2 =
  \bm{\sigma}_{j+1}^2 \|v_{j+1}\|^2 + 2 \bm{\sigma}_{j+1} \bm{\omega}_j v_{j+1}^T \bm{p}_j + \bm{\omega}^2 \|\bm{p}_j\|^2,
\]
which yields the update on line~\ref{alg:slcg:update-pi}.

Not all shifted systems will require the
same number of iterations to converge and we interrupt iterations corresponding to values
of the shift for which either the required tolerance is reached, or negative
curvature is detected.


We now describe how negative curvature may be detected during the iterations of
\Cref{alg:slcg}. Because the argument is independent of the shift, we
assume that \(m = 1\) and \(\lambda_1 = 0\), i.e., we solve the system \(Mx = b\)
with the Lanczos variant of CG\@. At iteration \(j\),
\[
  \delta_j = v_j^T M v_j,
\]
where the vectors \(\{v_j\}\) are orthonormal. If negative curvature is present, \(\delta_j\) may never reveal so, but \(p_j^T M p_j\) will.
We seek a cheap expression to check the sign of \(p_j^T M p_j\) where the vectors \(\{p_j\}\) are \(M\)-conjugate.
At iteration \(j\), \(p_j = r_j + \omega_{j-1} p_{j-1}\), where \(r_j = b - M x_j\) is updated cheaply via \(r_j = \sigma_j v_j\), and \(\omega_{j-1}\) and \(\sigma_j\) are scalars.
Thus
\begin{align}
  p_j^T M p_j
  & = p_j^T M r_j + \omega_{j-1} p_j^T M p_{j-1} \nonumber
  \\
  & = p_j^T M r_j \nonumber
  \\
  & = r_j^T M r_j + \omega_{j-1} p_{j-1}^T M r_j \nonumber
  \\
  & = \sigma_j^2 \delta_j + \omega_{j-1} p_{j-1}^T M r_j. \label{eq:pMp}
\end{align}
The iterates are updated according to \(x_j = x_{j-1} + \gamma_{j-1} p_{j-1}\),
so that
\[
  r_j =
  b - M x_j =
  b - M x_{j-1} - \gamma_{j-1} M p_{j-1} =
  r_{j-1} - \gamma_{j-1} M p_{j-1}.
\]
By orthogonality,
\[
  \sigma_j^2 =
  r_j^T r_j =
  r_j^T r_{j-1} - \gamma_{j-1} r_j^T M p_{j-1} =
  -\gamma_{j-1} r_j^T M p_{j-1},
\]
and therefore,
\begin{equation}
  \label{eq:pMr}
  p_{j-1}^T M r_j =
  -\frac{1}{\gamma_{j-1}} r_j^T r_j =
  -\frac{1}{\gamma_{j-1}} \sigma_j^2.
\end{equation}
Finally, using the update formula for \(\gamma_j\),~\eqref{eq:pMp} and~\eqref{eq:pMr} combine to yield
\[
  p_j^T M p_j =
  \sigma_j^2 (\delta_j - \omega_{j-1} / \gamma_{j-1}) =
  \sigma_j^2 / \gamma_j.
\]
Therefore the sign of \(\gamma_j\) is the same as that of \(p_j^T M p_j\).


\section{Implementation and numerical experiments}%
\label{sec:NumExp}

We implemented \Cref{alg:ARCqK} in the Julia language version~\(1.2\) and used the following packages from the JuliaSmoothOptimizers organization~\citep{jso-2019}:
\begin{itemize}
  \item NLPModels.jl provides a consistent optimization problem interface on which solvers can rely regardless of where problems come from;
  \item CUTEst.jl gives solvers access to the \textsf{CUTEst} collection \citep{gould-orban-toint-2015} by bridging it with NLPModels.jl;
  \item Krylov.jl provides an implementation of \Cref{alg:slcg} alongside other Krylov methods;
  \item SolverBenchmark.jl provides data structures to gather problem statistics, and utilities to generate tables of results and performance profiles;
  \item SolverTools.jl implements building blocks for optimization algorithms, including trust region management and linesearch procedures, and facilities for benchmarking solvers on collections of test problems.
\end{itemize}

During the course of \Cref{alg:slcg}, the solution of the \(i\)th system \((\nabla^2 f(x_k) + \lambda_i I) d = -\nabla f(x_k)\), \(i = 1, \dots, m\), is interrupted as soon as there is an iteration \(j\) for which the \(i\)th component of \(\bm{\gamma}_j\) is negative, which indicates that \(\nabla^2 f(x) + \lambda_i I \not \succeq 0\).

If \(\bm{\gamma}_j \geq 0\) for all \(j\), the solution of the \(i\)th system is terminated as soon as there is an iteration \(j\) for which the residual norm \(\|r_j\| = \|\nabla f(x_k) + (\nabla^2f(x_k) + \lambda_i I) d_j\|\) satisfies \Cref{asm:relaxedresid}.

The complete procedure is summarized as \Cref{alg:ARCqK}.
At line~\ref{def-i+}, we let \(i^+\) be the index of the smallest shift for which no negative curvature was detected.
Thus, no negative curvature was detected for any \(i^+ \leq i \leq m\).
If there exists no such \(i^+\), the algorithm terminates because \(\nabla^2 f(x_k)\) has negative eigenvalues larger than the largest allowed shift in magnitude.
In our implementation, that means negative eigenvalues smaller than \(-10^{15}\).
It is always possible to increase the number and/or the maximum value of the sampled \(\lambda\) if such occurrences happen often.
Such a situation never happened in our numerical experiments.

If \(d_i^* \approx d(\lambda_i)\) denotes the solution thus identified by \Cref{alg:slcg} corresponding to \(\lambda_i\), we retain the one that most closely satisfies \(\alpha_k \lambda_i = \|d_i^*\|\) at line~\ref{def-j}.

When an iteration is unsuccessful, \arcq must compute a minimizer of~\eqref{eq:ARC} with the updated regularization parameter \(\alpha_{k+1} = \gamma_1 \alpha_k\).
By contrast, \arcqk simply turns its attention to the next shift value and the associated search direction, which was already computed.
For a given \(\lambda\), an exact solution \(d(\lambda)\) would satisfy~\eqref{eq:Gmind2}.
If \(\lambda_k\) is the shift used at iteration~\(k\), and the iteration is unsuccessful, we search for the smallest \(\lambda_j > \lambda_k\), i.e., the smallest \(j > k\) such that \(\alpha(\lambda_j) := \|d_j\| / \lambda_j \leq \gamma_1 \alpha_k\).
If this search were to fail, it would indicate that our sample of shifts does not contain sufficiently large values.

\begin{algorithm}[htbp]
  \caption{\arcqk.}%
  \label{alg:ARCqK}
  \begin{algorithmic}[1]
    \State Initialize \(x_0 \in \R^n\), \(\alpha_0 > 0\), \(0<\eta_1<\eta_2<1\), \(0 < \gamma_1 < 1 < \gamma_2\), \(\bm{\lambda}\), \(k = 0\)
    \Repeat
    \State solve \((\nabla^2 f(x_k) + \bm{\lambda} I) d(\bm{\lambda}) = -\nabla f(x_k)\) for \(d(\bm{\lambda})\) \Comment{use \Cref{alg:slcg}}
      \State success \(=\) \CLE{false}
      \State\label{def-i+}%
        \(i^+ = \min \{0\le i\le m \mid \nabla^2 f(x)+\lambda_i I \succ 0\}\)
      \State\label{def-j}%
        \(j = \arg\min \{ |\alpha\lambda_i-\|d(\lambda_i)\|| \mid i^+ \le i \le m\}\)
      \Repeat
      \State  \(d_k = d(\lambda_j)\)
      \State compute \(\rho_k = \dfrac{f(x_k) - f(x_k + d_k)}{q_k(0) - q_k(d_k)}\)
      \If {\(\rho_k < \eta_1\)} \Comment{unsuccessful}
      \State \(\alpha_{k+1} = \alpha_k\)
      \While {\(\alpha_{k+1} > \gamma_1\alpha_k\)}
         \State \(\alpha_{k+1} = \|d_{j+1}\| / \lambda_{j+1}\)  \Comment{consider next shifts}
          \State \(j = j+1\)
      \EndWhile
      \Else \Comment{successful}
        \State success \(=\) \CLE{true}
        \State \(x_{k+1} = x_k + d_k\)
        \If {\(\rho_k > \eta_2\)}  \Comment{very successful}
          \State \(\alpha_{k+1} = \gamma_2 \alpha_k\) \Comment{increase \(\alpha\)}
        \Else
          \State \(\alpha_{k+1} = \alpha_k\)
        \EndIf
      \EndIf
      \Until  success
      \State \(k \gets k + 1\)
    \Until  termination\_criterion
  \end{algorithmic}
\end{algorithm}

In order to assess the scalability of our implementation, we perform numerical experiments using all \(240\) unconstrained problems from the \textsf{CUTEst} collection.
We compare \arcqk with an implementation of the truncated conjugate-gradient trust-region method \citep{steihaug-1983, toint-1981}.
Solving a trust-region subproblem
\[
  \minimize{d} \ q_x(d) \quad \st \ \|d\| \leq \Delta,
\]
where \(\Delta > 0\) is the trust-region radius, involves identifying a shift \(\lambda \geq 0\) satisfying~\eqref{eq:Gmind}--\eqref{eq:SDP} but with~\eqref{eq:Gmind2} replaced with the complementarity condition
\[
  \lambda (\Delta - \|d\|) = 0.
\]
We refer the interested reader to~\citep{conn-gould-toint-2000,steihaug-1983} for more details.
For our comparison, we use a classical Steihaug-Toint approach in which we use a standard non shifted conjugate gradient algorithm which is interrupted either when reaching the boundary of the trust region or when satisfying the required accuracy.

Our experiments ran on a \(2.4\)GHz four-core Intel Xeon E5530 with hyperthreading and \(46\)Gb of memory.
We limit the computing time to \(3,600\) seconds and declare stationarity as soon as \(\|\nabla f(x)\| \leq \epsilon_a + \epsilon_r \|\nabla f(x_0)\|\) with \(\epsilon_a = 10^{-5}\) and \(\epsilon_r = 10^{-6}\).
We do not set limits on the number of function evaluations.

The complete Julia implementation of \Cref{alg:ARCqK} and the Steihaug-Toint variant are described by~\citet{dussault-2019}.
The constants used are \(\zeta=0.5\), \(\gamma_1=0.1\), \(\gamma_2=5.0\), \(\eta_1=0.1\), \(\eta_2=0.75\).
The required accuracy for both the Steihaug-Toint trust region variant and for \arcqk\ is set to \(\|r\|\le \|\nabla f(x)\|^{1+\zeta}\), see \Cref{asm:resid}.

\arcqk solved \(223\) problems successfully, exceeded the time budget on \(14\) problems, declared problem {\tt hielow} unbounded below,\footnote{An iterate was generated where the objective evaluated to \(-\infty\).} and exited for an unknown reason on problem {\tt nelsonls}.
Our Steihaug-Toint implementation solved \(226\) problems, and exceeded the time budget on \(14\) problems.
Both solvers exceeded the time budget on {\tt ba\_l16ls}, {\tt ba\_l21ls}, {\tt ba\_l49ls}, {\tt ba\_l52ls}, {\tt ba\_l73ls}, {\tt eigenbls}, {\tt fletcbv3}, {\tt fletchbv}, {\tt indef}, {\tt jimack}, {\tt nonsmsqrt}, {\tt sbrybnd} and {\tt scosine}.
In addition, \arcqk exceeded the time budget on {\tt eigencls} and {\tt mnists5ls}, and the Steihaug-Toint implementation exceeded the time budget on {\tt yatp2ls}.
Note that most of the above problems are large nonlinear least-squares problems for which Newton's method is probably not appropriate.

Complete numerical results for both algorithms are reported in the electronic supplement accompanying this paper.
\Cref{fig:arc-st} reports aggregated results in the form of time and evaluation \citet{dolan-more-2002} performance profiles.

While our preliminary implementation is slower than the Steihaug-Toint method, and requires more objective value and gradient evaluations, it saves a substantial amount of Hessian-vector products.
Moreover, for a restricted set of the \textsf{CUTEst} collection of all (113) the problems of size at least 100, \arcqk matches or outperforms the Steihaug-Toint method.
This is to be expected, the overhead of solving the shifted systems being relatively high for smaller instances.

\begin{figure}[p]
  \begin{center}
    \includegraphics[height=.185\textheight]{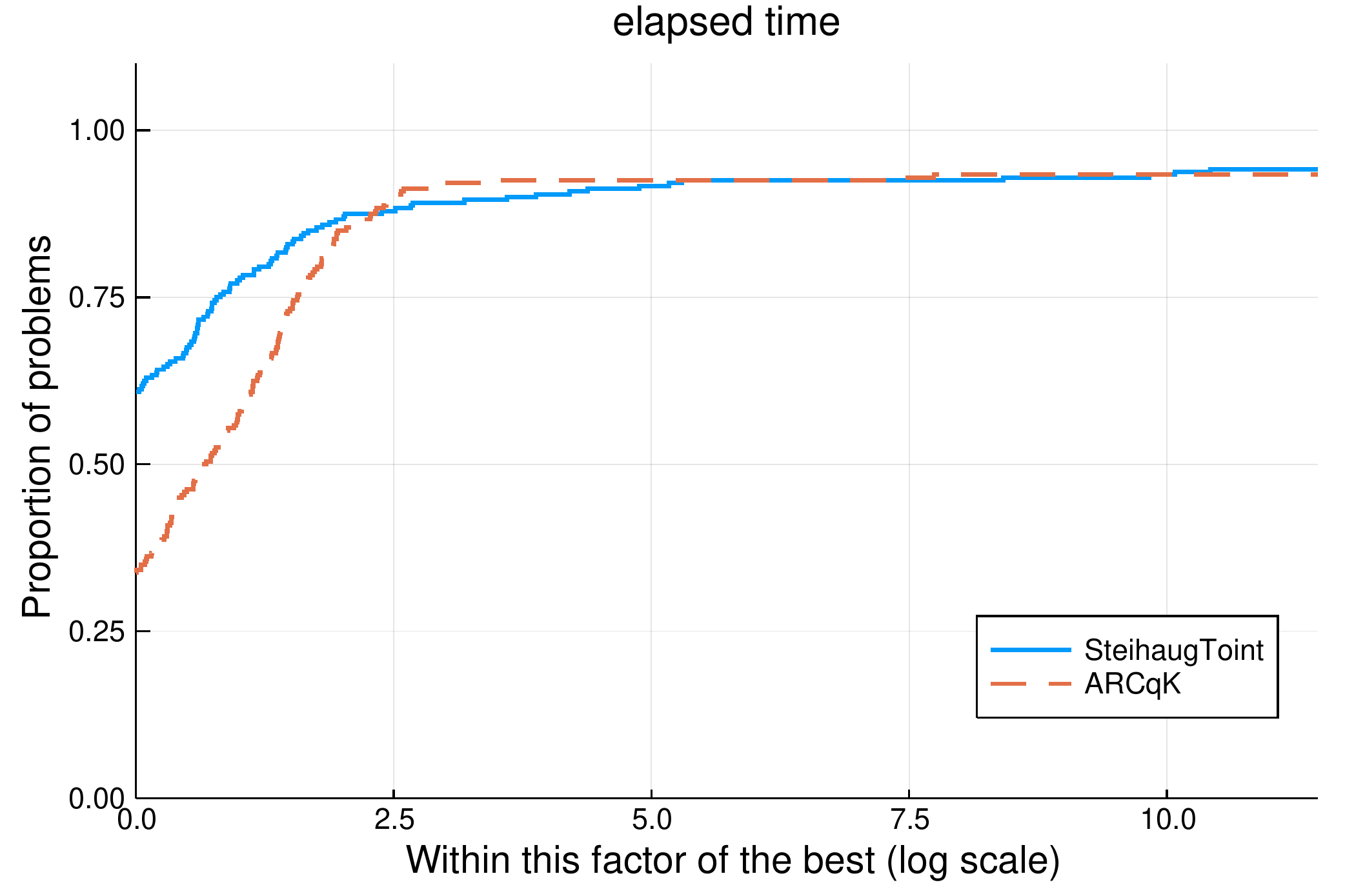}
    \hfill
    \includegraphics[height=.185\textheight]{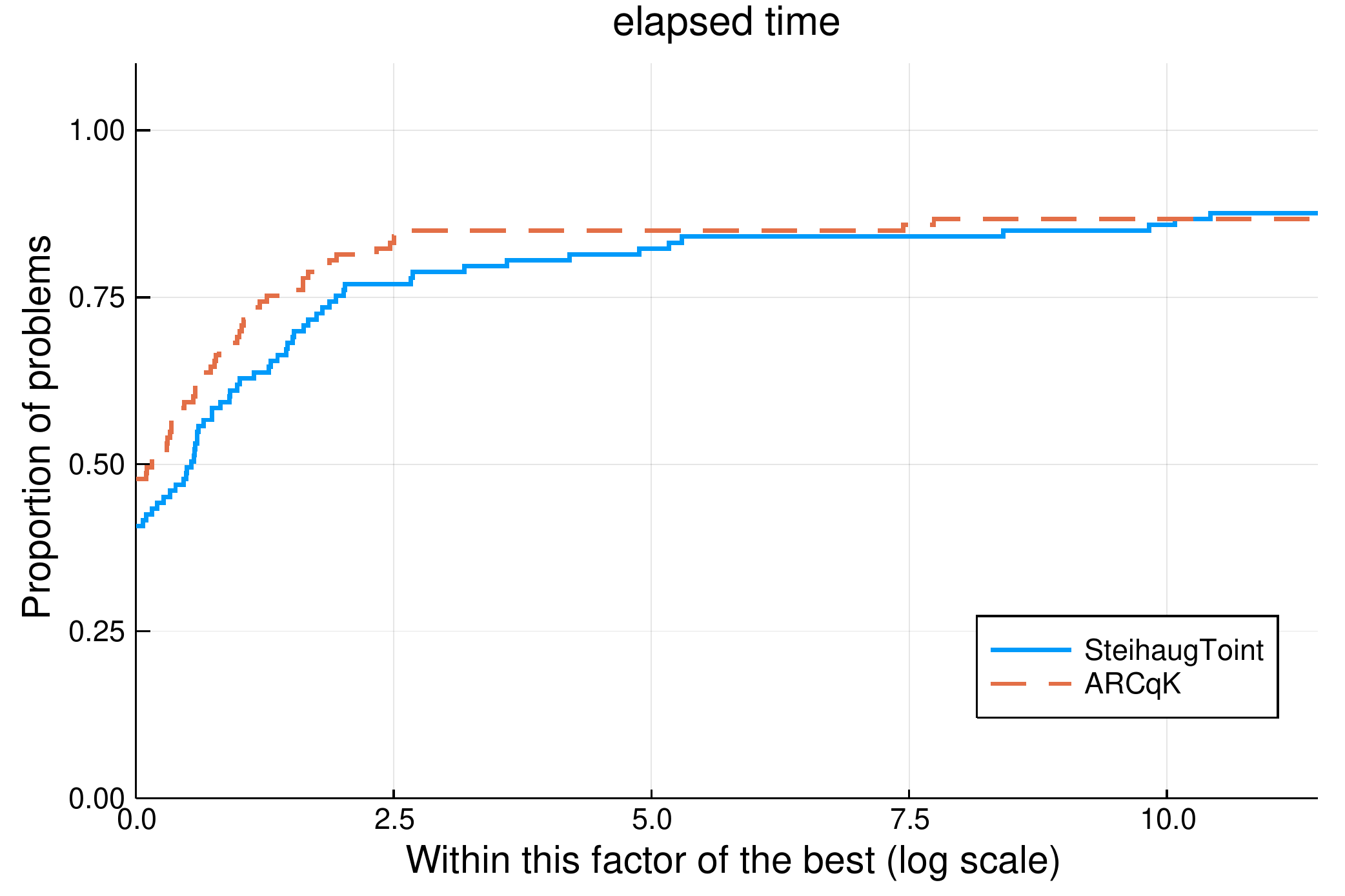}
    \\ 
    \includegraphics[height=.185\textheight]{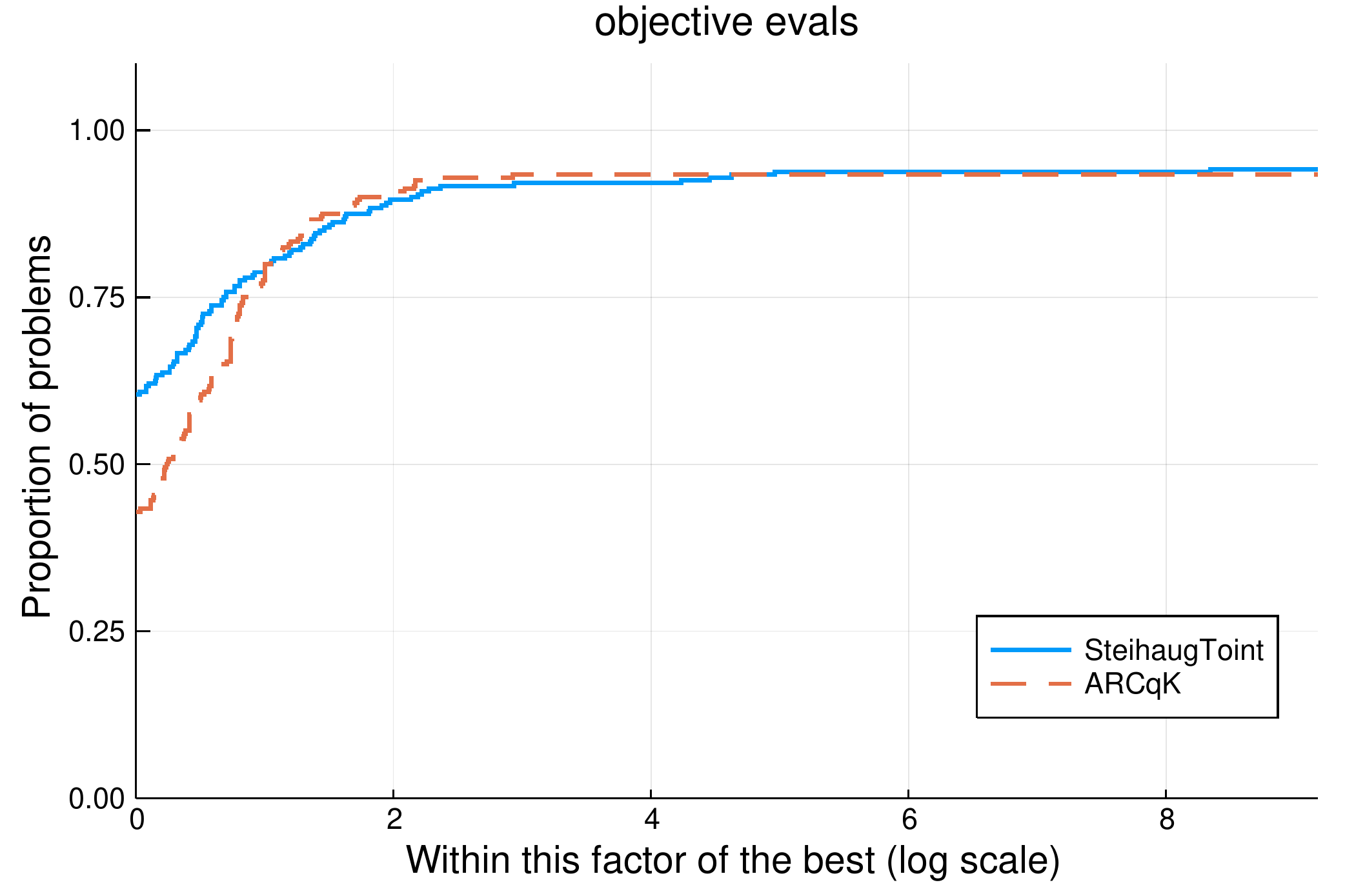}
    \hfill
    \includegraphics[height=.185\textheight]{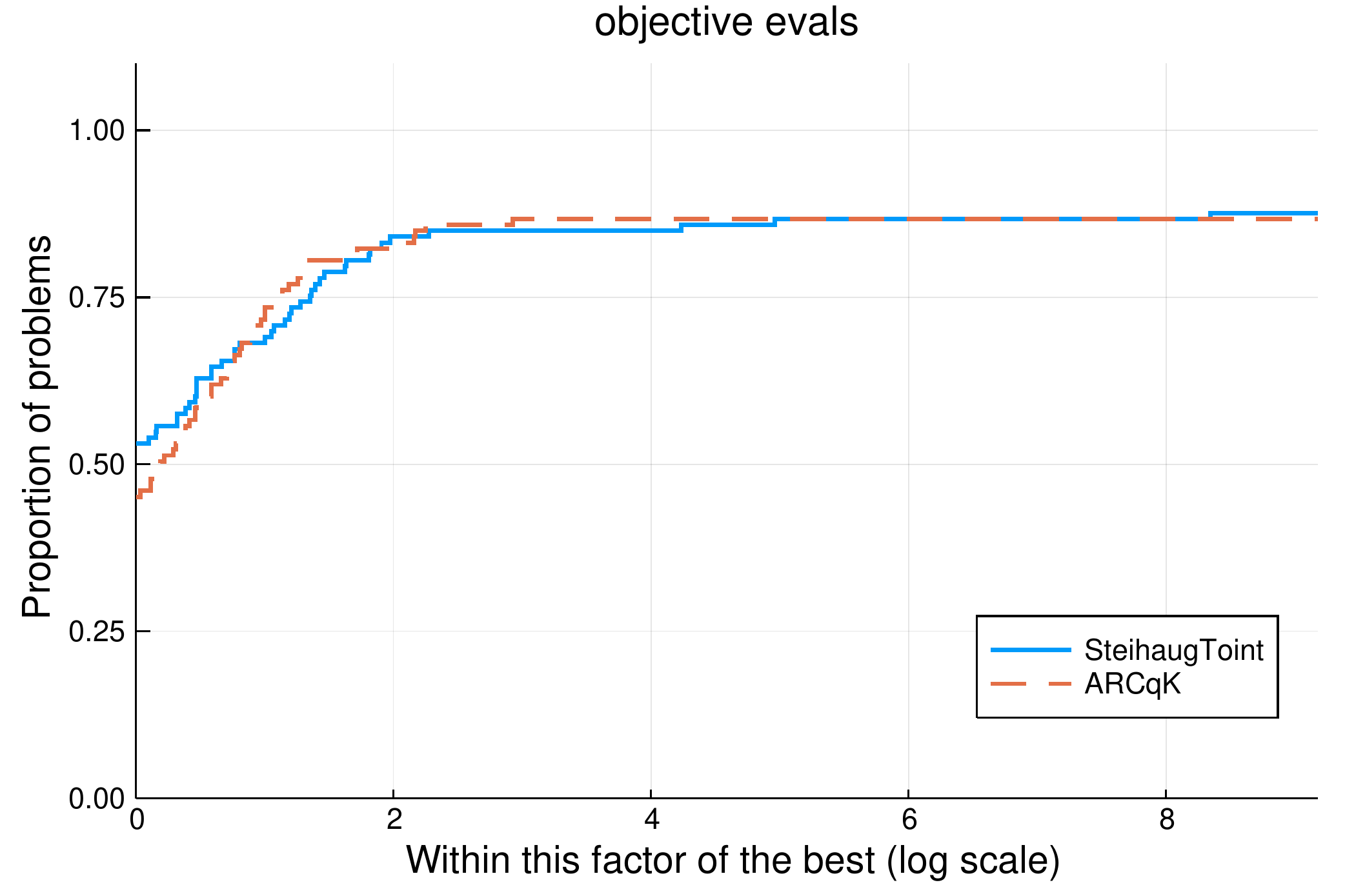}
    \\ 
    \includegraphics[height=.185\textheight]{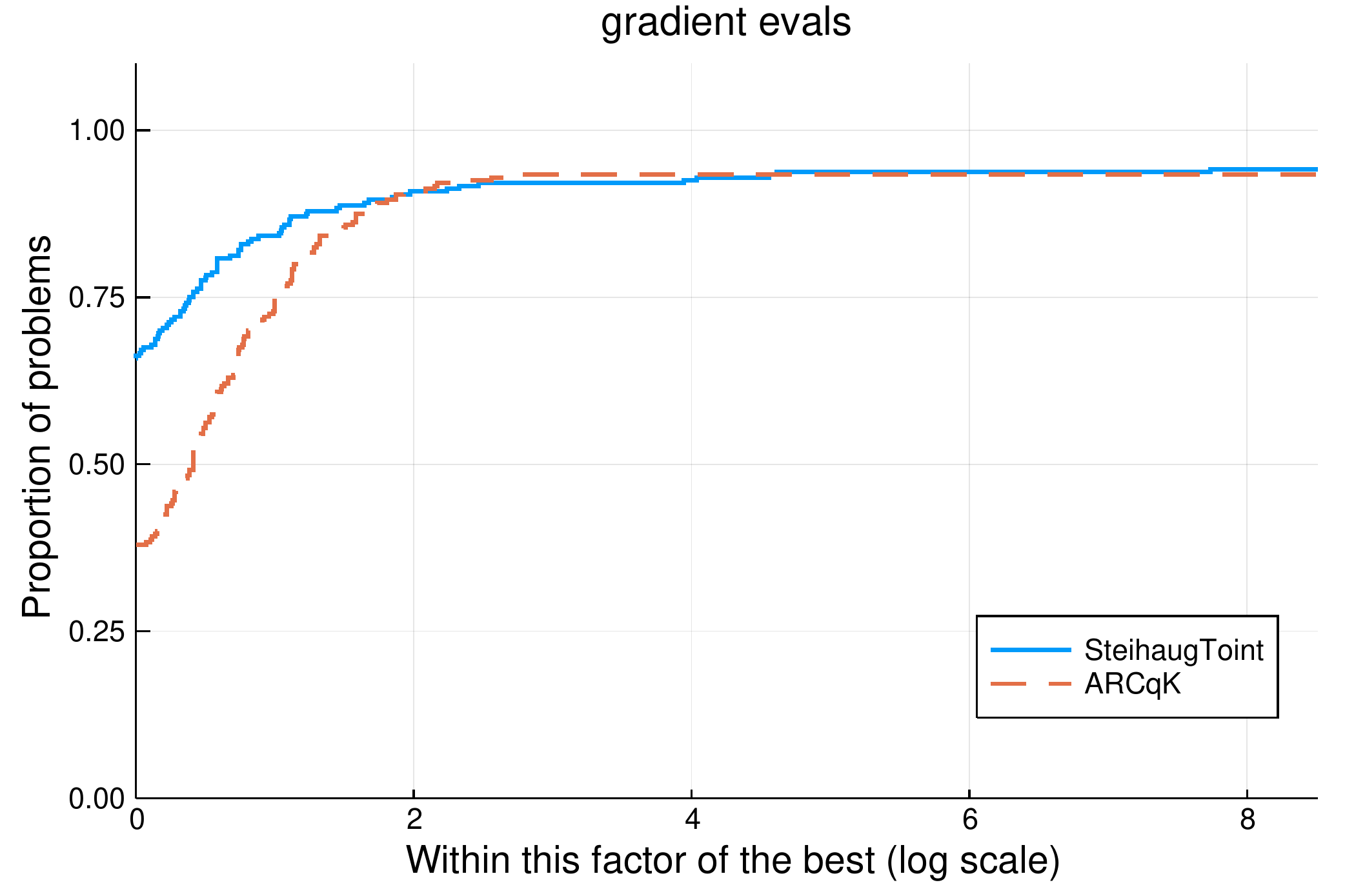}
    \hfill
    \includegraphics[height=.185\textheight]{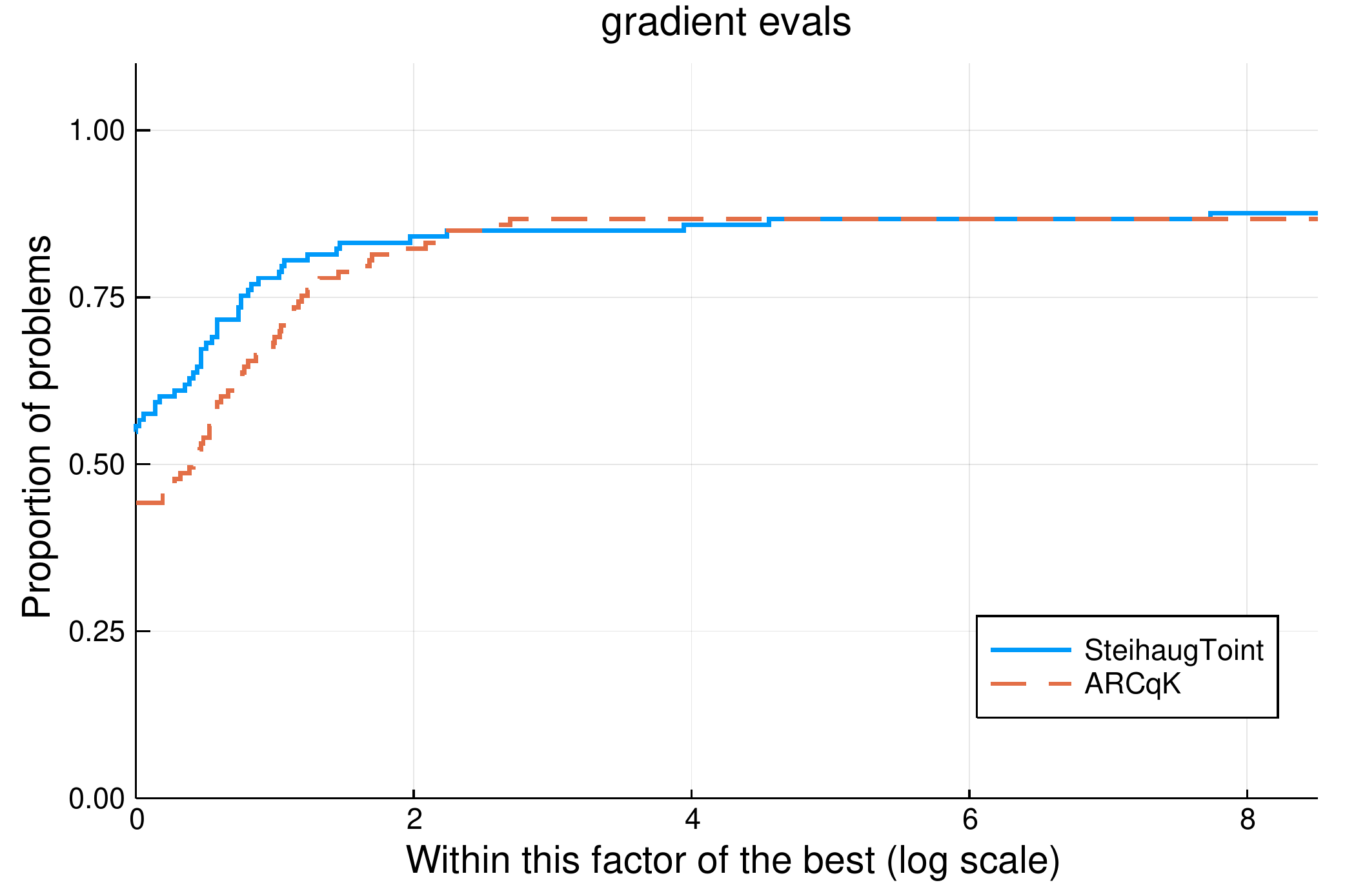}
    \\ 
    \includegraphics[height=.185\textheight]{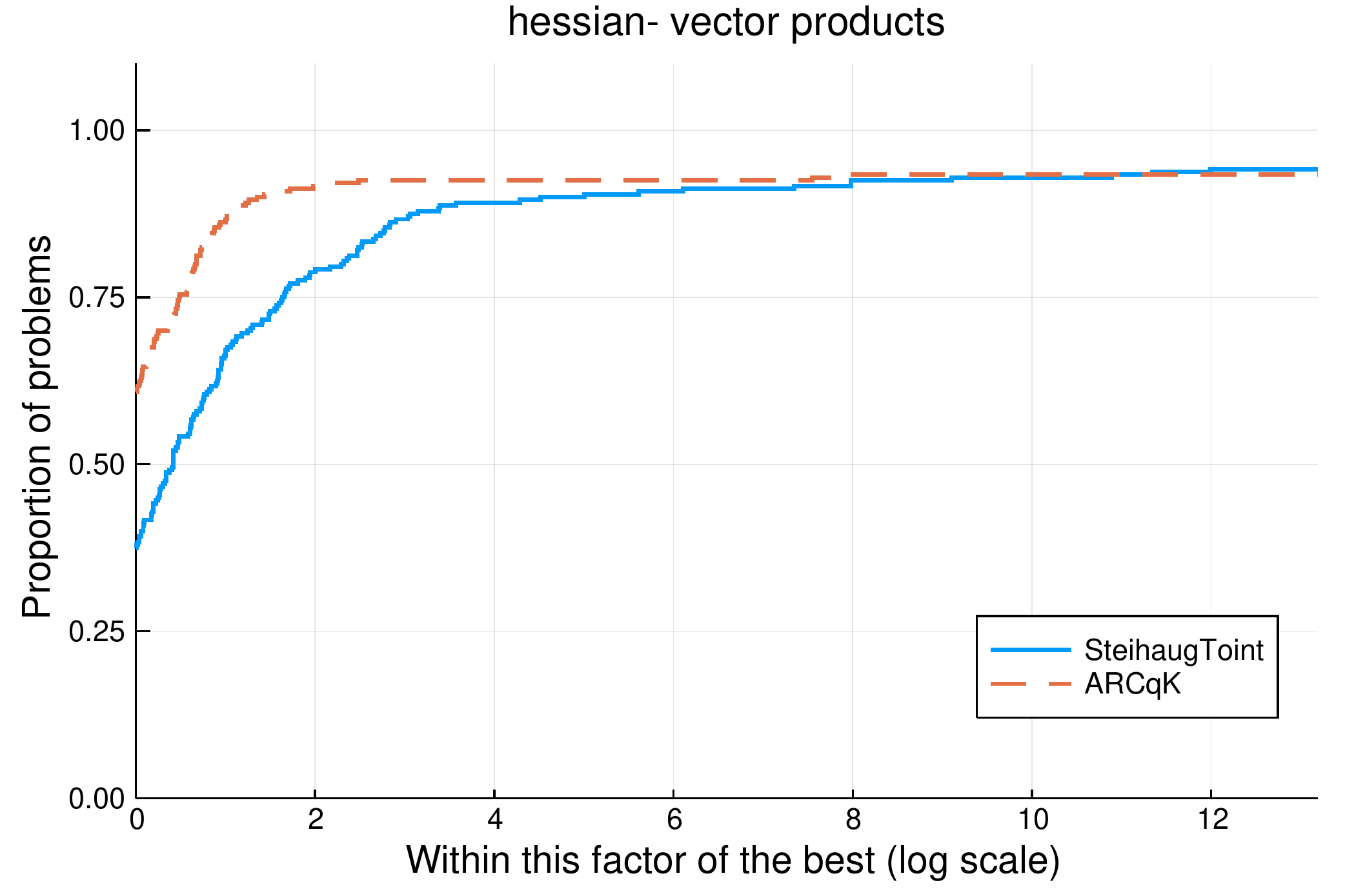}
    \hfill
    \includegraphics[height=.185\textheight]{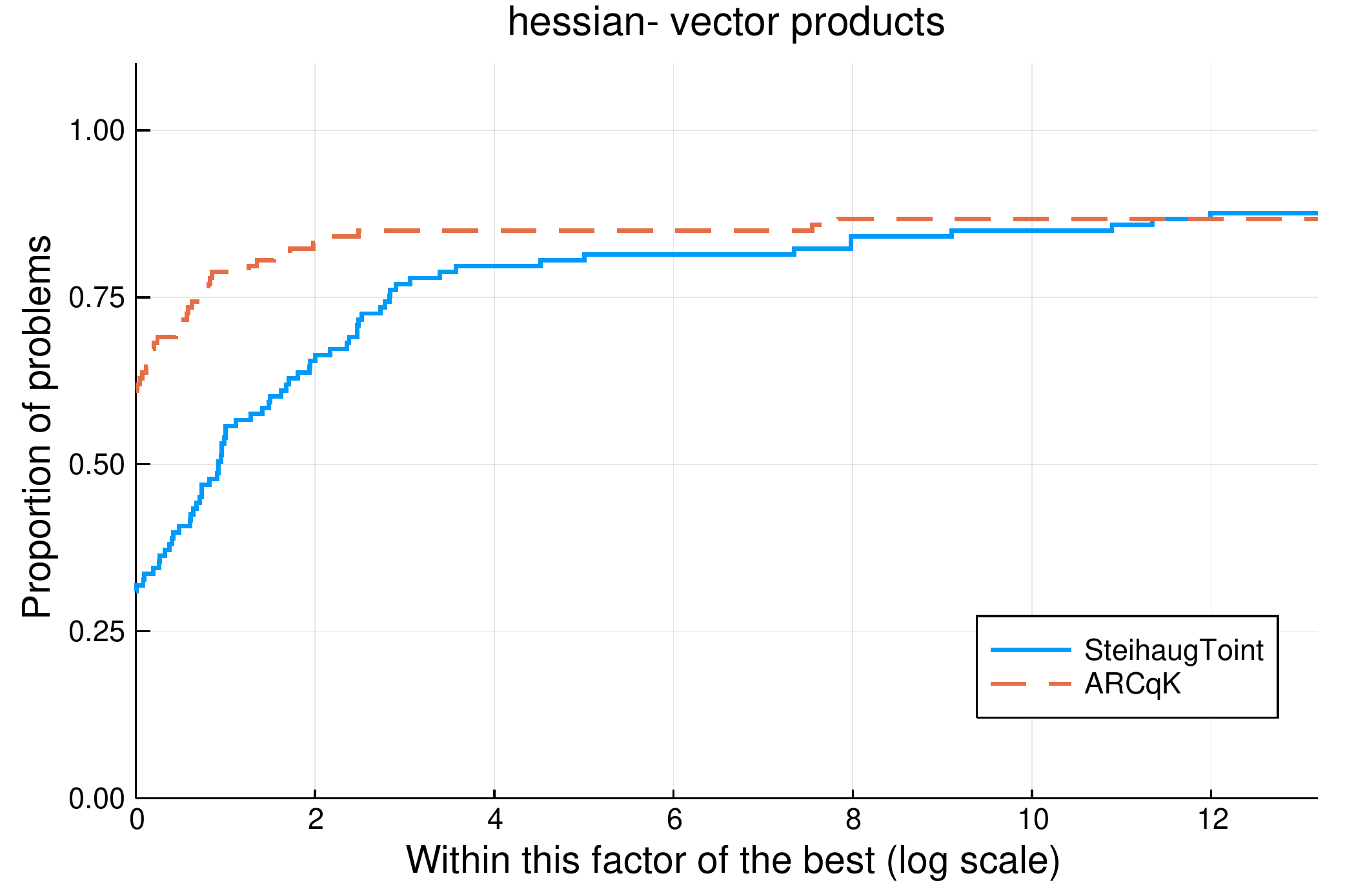}
    \\ 
    \includegraphics[height=.185\textheight]{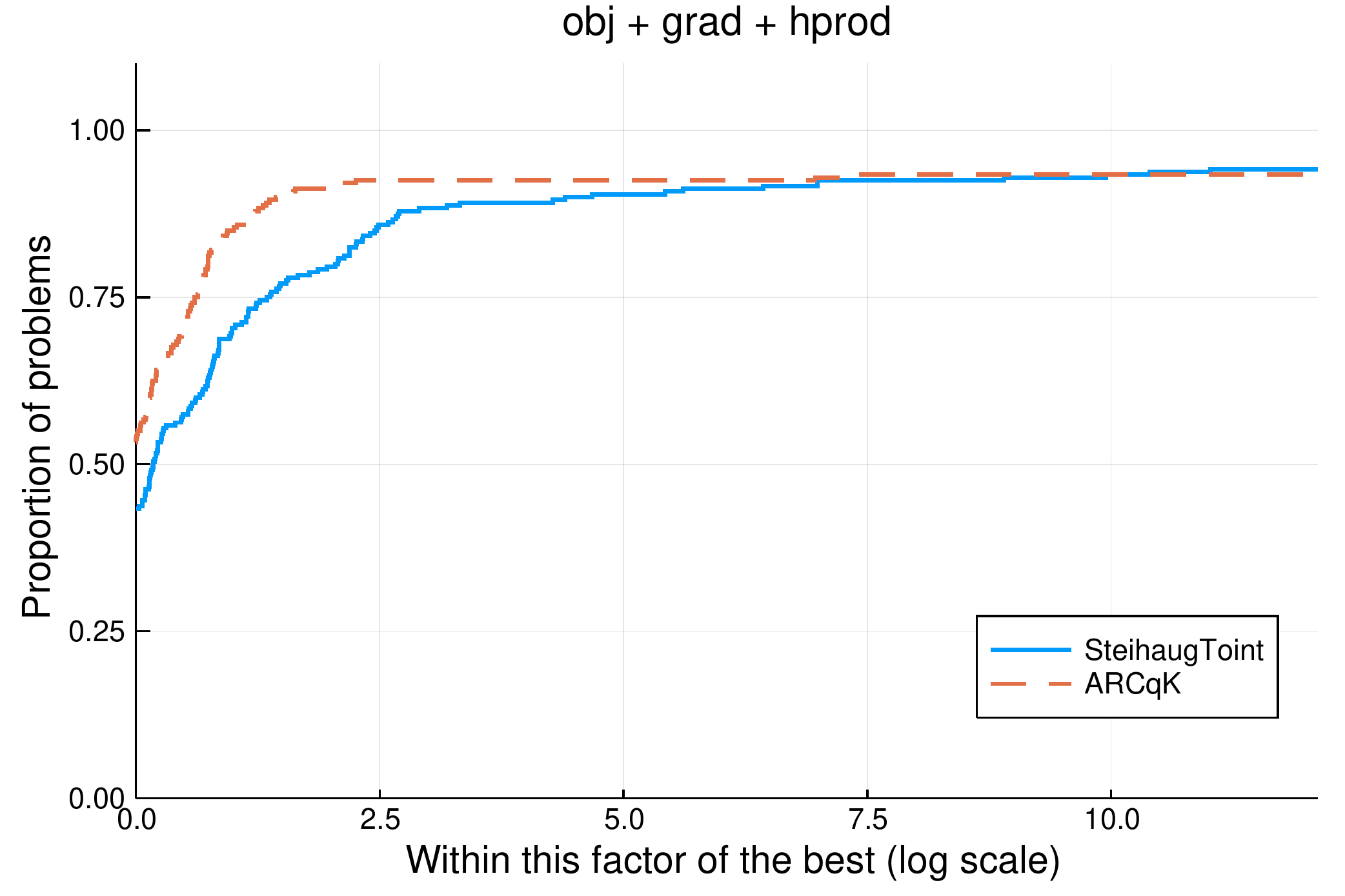}
    \hfill
    \includegraphics[height=.185\textheight]{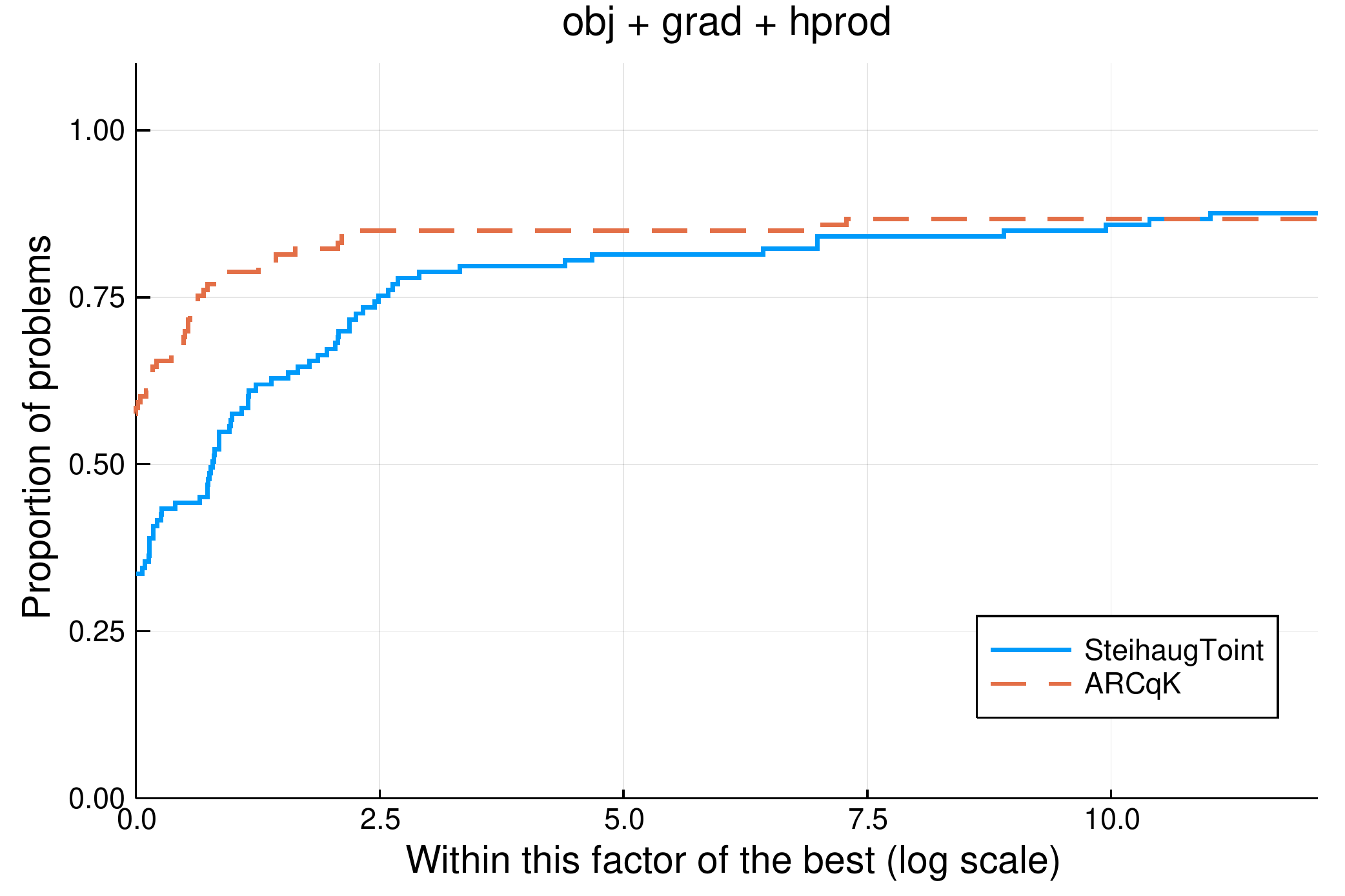}
  \end{center}
  \caption{%
    \label{fig:arc-st}
    Performance profiles comparing \arcqk with a standard truncated conjugate-gradient approach on \(240\) unconstrained \textsf{CUTEst} problems (left) and the \(113\) problems with \(n \geq 100\) (right).
  }
\end{figure}

\section{Extensions}

\subsection{Inexact Hessians}

In many practical situations, the exact Hessian \(\nabla^2 f(x_k)\) is either not available or too costly to evaluate, and we may wish to use an approximation.
Thus, instead of~\eqref{eq:quad-model}, our quadratic model at iteration \(k\) becomes
\[
  q_k(d) = f(x_k) + \nabla f(x_k)^T d + \tfrac{1}{2} d^T B_k d,
\]
where \(B_k = B_k^T\).
We update \Cref{asm:approxmin} as follows.

%
%
%
\begin{assumptionc}{asm:globalmin}%
  \label{asm:approxmin-inexact}
  At line~\ref{line:globalmin} of \Cref{alg:ARCq}, we compute a direction \(d\) satisfying~\eqref{eq:Kr1}--\eqref{eq:Kr3} with \(B_k\) in place of \(\nabla^2 f(x_k)\) in~\eqref{eq:Kr1} and~\eqref{eq:Kr2}.
\end{assumptionc}

With this change, we may verify that the results of \Cref{sec:convergence} can be adapted to accommodate inexact Hessians.
Under \Cref{asm:approxmin-inexact}, \Cref{le:deltaqmK,le:alphaMinK} are unchanged.
We now make the following additional assumption on the quality of the approximation, which was also used with \(\zeta = 1\) by \citet{cartis-gould-toint-2011a,cartis-gould-toint-2011b}.

\begin{assumption}%
  \label{asm:inexact-hessian}
  There exists a constant \(C_H > 0\) such that for all \(k\),
  \[
    \| (\nabla^2 f(x_k) - B_k) d_k \| \leq C_H \|d_k\|^{1 + \zeta},
  \]
  where \(\zeta\) is as in \Cref{asm:relaxedresid}.
\end{assumption}

We now verify that if an inexact step \(d_k\) is computed so that the residual \(r_k\) satisfies \Cref{asm:relaxedresid}, \Cref{le:quadK-relaxedresid} continues to apply.

\begin{lemma}%
  \label{le:quadK-relaxedresid-inexact-hessian}
  Let \Cref{asm:C2,asm:d2f-lipschitz,asm:approxmin-inexact,asm:relaxedresid} be satisfied and assume that \(r_k^T d_k = 0\).
  Then \(\|\nabla f(x_{k+1})\| \le \kappa_g^{-1} (\|d_k\|^2 + \|d_k\|^{1 + \zeta})\) for each successful iteration \(k\), where
  \(\kappa_g :=  2 \, \max(\frac12 L + \beta / \alpha_{\min}, \, C_H + \xi)\).
\end{lemma}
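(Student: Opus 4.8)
The plan is to retrace the proof of \Cref{le:quadK-relaxedresid} almost verbatim, inserting a single extra term that measures the discrepancy between \(B_k\) and \(\nabla^2 f(x_k)\). I would start from the integral form of the gradient,
\[
  \nabla f(x_{k+1}) = \nabla f(x_k) + \int_0^1 \nabla^2 f(x_k + \tau d_k) \, d_k \, \mathrm{d}\tau,
\]
substitute \(\nabla f(x_k) = r_k - (B_k + \lambda_k I) d_k\), which is the inexact Newton relation \eqref{eq:Kr1} as modified by \Cref{asm:approxmin-inexact}, and then split the integrand so that the matrix error appears as the single vector \((\nabla^2 f(x_k) - B_k) d_k\):
\[
  \nabla f(x_{k+1}) = \int_0^1 \bigl(\nabla^2 f(x_k + \tau d_k) - \nabla^2 f(x_k)\bigr) d_k \, \mathrm{d}\tau + (\nabla^2 f(x_k) - B_k) d_k - \lambda_k d_k + r_k.
\]

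Next I would apply the triangle inequality and bound the four pieces separately: the first integral is controlled by \(\tfrac12 L \|d_k\|^2\) via \Cref{asm:d2f-lipschitz} exactly as in \Cref{le:quadK}; the new middle term is bounded by \(C_H \|d_k\|^{1+\zeta}\) using \Cref{asm:inexact-hessian}; the term \(\lambda_k \|d_k\|\) is bounded by \((\beta/\alpha_{\min})\|d_k\|^2\) using \eqref{eq:Kr3} together with \Cref{le:alphaMinK} (which, as already noted in the text, still holds under \Cref{asm:approxmin-inexact}); and \(\|r_k\|\) is bounded by \(\xi \|d_k\|^{1+\zeta}\) using \Cref{asm:relaxedresid}. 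Collecting the two \(\|d_k\|^2\) contributions and the two \(\|d_k\|^{1+\zeta}\) contributions yields
\[
  \|\nabla f(x_{k+1})\| \le \bigl(\tfrac12 L + \beta/\alpha_{\min}\bigr) \|d_k\|^2 + (C_H + \xi) \|d_k\|^{1+\zeta},
\]
and bounding each coefficient by \(\max(\tfrac12 L + \beta/\alpha_{\min}, \, C_H + \xi)\) gives the claim with the stated \(\kappa_g\), exactly as in the proof of \Cref{le:quadK-relaxedresid}.

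I do not expect a genuine obstacle here; the only point needing care is the grouping in the norm-splitting step, since \Cref{asm:inexact-hessian} bounds \(\|(\nabla^2 f(x_k) - B_k) d_k\|\) rather than an operator norm of \(\nabla^2 f(x_k) - B_k\). Keeping \((\nabla^2 f(x_k) - B_k) d_k\) intact as a vector inside the triangle inequality — rather than factoring it as a matrix times \(d_k\) — is what makes \Cref{asm:inexact-hessian} directly applicable, and it also shows that the inexact-Hessian error enters only as a \(\|d_k\|^{1+\zeta}\) term of the same order as the residual, so that the complexity argument of \Cref{cor:CplxARCqK} carries over with merely the constant \(\xi\) replaced by \(C_H + \xi\).
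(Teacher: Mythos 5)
Your proof is correct and follows essentially the same route as the paper's: the same integral representation of \(\nabla f(x_{k+1})\), the same substitution of the inexact Newton relation with \(B_k\), the same splitting that isolates \((\nabla^2 f(x_k) - B_k) d_k\) as a single vector so that \Cref{asm:inexact-hessian} applies directly, and the same final grouping into \((\tfrac12 L + \beta/\alpha_{\min})\|d_k\|^2 + (C_H + \xi)\|d_k\|^{1+\zeta}\). Your closing remark about why the term must be kept as a vector rather than an operator-norm bound is exactly the point the paper's decomposition is designed around.
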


\begin{proof}
  Proceeding as in the proof of \Cref{le:quadK}, we bound \(\|\nabla f(x_{k+1})\|\) above with
  \begin{align*}
    & \left\| \nabla f(x_k) + \int_0^1 \nabla^2 f(x_k+\tau d_k) d_k \, \mathrm{d}\tau \right\|
    \\ = \ &
    \left\|\int_0^1 \nabla^2 f(x_k+\tau d_k) d_k \, \mathrm{d}\tau - (B_k+\lambda_k I) d_k + r_k \right\|
    \\ = \ &
    \left\|\int_0^1 (\nabla^2 f(x_k+\tau d_k) -\nabla^2 f(x_k)) d_k \, \mathrm{d}\tau + (\nabla^2 f(x_k) - B_k) d_k -\lambda_k d_k + r_k \right\|
    \\ \le \ &
    \|d_k\| \, \left\|\int_0^1 L\tau d_k \, \mathrm{d}\tau \right\| + C_H \|d_k\|^{1 + \zeta} + \lambda_k \|d_k\| + \|r_k\|.
  \end{align*}
  Now,~\eqref{eq:Kr3}, \Cref{asm:relaxedresid,asm:inexact-hessian}, and \Cref{le:alphaMinK} yield
  \[
    \|\nabla f(x_{k+1})\| \le
    (\tfrac{1}{2} L + \beta / \alpha_{\min}) \, \|d_k\|^{2} + (C_H + \xi) \, \|d_k\|^{1 + \zeta}.
  \]
  We may now conclude as in the proof of \Cref{le:quadK-relaxedresid}.
\end{proof}

The complexity bound of \Cref{cor:CplxARCqK} continues to apply.
Finally, the local rate given in \Cref{th:SupLinARCqK} also continues to apply with a slight update to the proof.
We state it for completeness.

\begin{theorem}[Superlinear convergence of \arcq]%
  \label{th:SupLinARCqK-inexact}
  Let \Cref{asm:C2,asm:d2f-lipschitz,asm:lbnd,asm:approxmin-inexact,asm:alpha,asm:relaxedresid,asm:2nd-order,asm:inexact-hessian} be satisfied and assume that \(r_k^T d_k = 0\) for all sufficiently large \(k\).
  Then there exists an iteration \(K\) such that for all \(k \ge K\),~\eqref{eq:arcq-super} holds.
\end{theorem}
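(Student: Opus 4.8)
The plan is to mirror the proof of \Cref{th:SupLinARCqK} almost verbatim, substituting the inexact-Hessian quantities wherever the exact Hessian appeared, and checking that the two places where the argument used $\nabla^2 f(x_k)$ — namely the lower bound on $\Delta q_k$ and the bound $\|\nabla f(x_{k+1})\| = O(\|d_k\|^{1+\zeta})$ — still go through under \Cref{asm:inexact-hessian}. First I would record that, by \Cref{asm:2nd-order} and \Cref{asm:d2f-lipschitz}, the smallest eigenvalue of $\nabla^2 f(x_k)$ is at least $\tfrac12 \lambda_{\min}^*$ for $k$ large, where $\lambda_{\min}^* > 0$ is the smallest eigenvalue of $\nabla^2 f(x^*)$. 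The subtlety is that the model uses $B_k$, not $\nabla^2 f(x_k)$, so I need the smallest eigenvalue of $B_k$ to be bounded away from zero as well. This follows from \Cref{asm:inexact-hessian}: for any unit vector $u$, writing $d = \|d_k\| u$ one sees $|u^T(\nabla^2 f(x_k) - B_k) u| \le C_H \|d_k\|^{\zeta}$, which is $o(1)$ since $\|d_k\| \to 0$; hence $\lambda_{\min}(B_k) \ge \tfrac14 \lambda_{\min}^*$ for all $k$ past some index $K$.

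Next I would reproduce the $\rho_k \ge \eta_1$ argument. Proceeding as in the proof of \Cref{le:deltaqmK} but with $B_k$ in place of $\nabla^2 f(x_k)$ in~\eqref{eq:qcbound}, and using $\lambda_{\min}(B_k) \ge \tfrac14 \lambda_{\min}^*$, I get $\Delta q_k(d_k) \ge \tfrac18 \lambda_{\min}^* \|d_k\|^2$. The bound $f(x_k + d_k) - q_k(d_k) \le \tfrac12 L \|d_k\|^3 + C_H \|d_k\|^{2+\zeta}$ combines (using the same manipulation as in \Cref{le:alphaMinK}) to give $\rho_k \ge 1 - c\,\|d_k\|^{\min(1,\zeta)}$ for some constant $c$ depending on $L$, $C_H$, $\lambda_{\min}^*$; since $\{x_k\} \to x^*$ we may take $K$ large enough that the right-hand side exceeds $\eta_1$, so only successful iterations occur for $k \ge K$.

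Then I would bound $\|d_k\|$ in terms of $\|\nabla f(x_k)\|$ exactly as in \Cref{th:SupLinARCqK}: from~\eqref{eq:Kr1} (with $B_k$), $\|d_k\| \le \|(B_k + \lambda_k I)^{-1}\|\,(\|r_k\| + \|\nabla f(x_k)\|) \le 4(\lambda_{\min}^*)^{-1}(1 + \|\nabla f(x_k)\|^{\zeta})\,\|\nabla f(x_k)\|$, using $\lambda_k \ge 0$ and \Cref{asm:relaxedresid}. Finally, shrinking $K$ so that $\|d_k\| \le 1$ for $k \ge K$, \Cref{le:quadK-relaxedresid-inexact-hessian} gives $\|\nabla f(x_{k+1})\| \le \kappa_g^{-1}(\|d_k\|^2 + \|d_k\|^{1+\zeta}) \le 2\kappa_g^{-1}\|d_k\|^{1+\zeta}$, and substituting the $\|d_k\| \lesssim \|\nabla f(x_k)\|$ bound (absorbing the bounded factor $1 + \|\nabla f(x_k)\|^{\zeta}$ into the constant) yields~\eqref{eq:arcq-super}. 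The main obstacle — really the only point requiring care — is the step showing $B_k$ remains uniformly positive definite near $x^*$; everything else is a mechanical transcription of the existing proof with $C_H \|d_k\|^{1+\zeta}$ terms carried along, all of which are dominated by the terms already present since $\zeta \le 1$.
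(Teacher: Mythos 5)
Your overall strategy is the same as the paper's---reuse the proof of \Cref{th:SupLinARCqK}, carry the extra \(C_H\|d_k\|^{1+\zeta}\) terms through the \(\rho_k\) estimate and the bound on \(\|d_k\|\), and finish with \Cref{le:quadK-relaxedresid-inexact-hessian}---but the step you single out as ``the only point requiring care'' is resolved incorrectly. \Cref{asm:inexact-hessian} is a \emph{directional} (Dennis--Mor\'e-type) condition: it bounds \(\|(\nabla^2 f(x_k)-B_k)d_k\|\) only for the particular step \(d_k\) produced by the algorithm, not for arbitrary vectors of the same norm. Your deduction ``for any unit vector \(u\), writing \(d=\|d_k\|u\) one sees \(|u^T(\nabla^2 f(x_k)-B_k)u|\le C_H\|d_k\|^{\zeta}\)'' quantifies over \(d\) in a way the assumption does not permit, so the conclusion \(\lambda_{\min}(B_k)\ge\tfrac14\lambda_{\min}^*\) is unsupported. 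This matters where you invoke \(\|(B_k+\lambda_k I)^{-1}\|\le 4(\lambda_{\min}^*)^{-1}\): nothing in the hypotheses even guarantees \(B_k+\lambda_k I\) is invertible (condition~\eqref{eq:Kr2} only gives semidefiniteness along \(d_k\)).

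The fix is to stay on the step direction throughout, which is what the paper does. For the bound on \(\|d_k\|\), write \((B_k+\lambda_k I)d_k = r_k-\nabla f(x_k)\) and apply the reverse triangle inequality together with \Cref{asm:inexact-hessian}:
\[
  \|r_k-\nabla f(x_k)\| \ \ge\ \|(\nabla^2 f(x_k)+\lambda_k I)d_k\| - \|(\nabla^2 f(x_k)-B_k)d_k\| \ \ge\ \lambda_{\min}^k\|d_k\| - C_H\|d_k\|^{1+\zeta},
\]
and absorb the last term for large \(k\) since \(\|d_k\|\to 0\); this yields \(\|d_k\|=O(\|r_k\|+\|\nabla f(x_k)\|)\) without any statement about the spectrum of \(B_k\). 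Likewise, for \(\Delta q_k(d_k)\ge c\|d_k\|^2\) you only need \(d_k^T B_k d_k \ge d_k^T\nabla^2 f(x_k)d_k - C_H\|d_k\|^{2+\zeta}\), again a consequence of the directional bound alone. With those two substitutions the rest of your argument (the \(\rho_k\ge\eta_1\) estimate with the extra \(C_H\|d_k\|^{2+\zeta}\) term in the numerator, and the final appeal to \Cref{le:quadK-relaxedresid-inexact-hessian}) goes through as you describe.
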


\begin{proof}
  We proceed as in the proof of \Cref{th:SupLinARCqK} with the difference that we bound \(\|d_k\|\) as follows.
  \Cref{asm:approxmin-inexact,asm:inexact-hessian} allow us to write
  \begin{align*}
    \|r_k - \nabla f(x_k)\| & \leq
    \| (\nabla^2 f(x_k) + \lambda_k I) d_k \| + \|(\nabla^2 f(x_k) - B_k) d_k\|
    \\ & \leq \lambda_{\min}^k \|d_k\| + C_H \|d_k\|^{1 + \zeta}
    \\ & \leq \tfrac{1}{2} \lambda_{\min}^* \|d_k\| + C_H \|d_k\|^{1 + \zeta}.
  \end{align*}
  Because \(\{d_k\} \to 0\), we can choose \(K\) large enough that \(C_H \|d_k\|^{1 + \zeta} \leq \tfrac{1}{2} \lambda_{\min}^* \|d_k\|\) for all \(k \geq K\).
  \Cref{asm:relaxedresid} now yields
  \[
    \|d_k\| \leq
    \frac{\|r_k\| + \|\nabla f(x_k)\|}{\lambda_{\min}^*},
  \]
  and the proof concludes as in \Cref{th:SupLinARCqK}.
\end{proof}

\subsection{Nonlinear Least-Squares Problems}

A prime application of the extension to inexact Hessians occurs in the context of nonlinear least-squares problems, where
\[
  f(x) = \tfrac{1}{2} \|F(x)\|^2, \qquad
  F(x) =
  \begin{bmatrix}
    f_1(x) \\ \vdots \\ f_m(x)
  \end{bmatrix},
\]
and each \(f_i : \R^n \to \R\), \(i = 1, \dots, m\).
We begin with the following assumption.

\begin{assumption}%
  \label{asm:lsq-C2}
  Each \(f_i\), \(i = 1, \dots, m\), is twice continuously differentiable.
\end{assumption}

Evidently, \Cref{asm:lsq-C2} implies \Cref{asm:C2}.

For reference, we note that
\[
  \nabla f(x) = J(x)^T F(x)
  \quad \text{where} \quad
  J(x) :=
  \begin{bmatrix}
    \nabla f_1(x)^T
    \\ \vdots \\
    \nabla f_m(x)^T
  \end{bmatrix},
\]
and
\[
  \nabla^2 f(x) = J(x)^T J(x) + \sum_{i=1}^m f_i(x) \nabla^2 f_i(x).
\]

We follow \citet{cartis-gould-toint-2013} and make the following additional assumption.

\begin{assumption}%
  \label{asm:lsq-lipschitz}
  Each \(f_i\), \(\nabla f_i\) and \(\nabla^2 f_i\), \(i = 1, \dots, m\) is globally Lipschitz continuous.
\end{assumption}

Under \Cref{asm:lsq-lipschitz}, it is possible to show that \Cref{asm:d2f-lipschitz} is satisfied.

\Cref{asm:lbnd} is not necessary in the least-squares context as we may simply set \(f_{\text{low}} = 0\).

In practice, it is common to use the Gauss-Newton approximation \(J(x)^T J(x)\) instead of \(\nabla^2 f(x)\), which is justified if the residuals \(f_i\) are all either nearly zero or nearly linear around a local minimizer.
Thus we consider the following variant of \Cref{asm:inexact-hessian}.

\begin{assumption}%
  \label{asm:gauss-newton}
  There exists a constant \(C_H > 0\) such that for all \(k\),
  \[
    \|\sum_{i=1}^m f_i(x_k) \nabla^2 f_i(x_k) d_k\| \leq C_H \, \|d_k\|^{1 + \zeta},
  \]
  where \(\zeta\) is as in \Cref{asm:relaxedresid}.
\end{assumption}

Under the above assumption, the convergence and complexity properties of the previous sections apply to the nonlinear least-squares problem.
Obviously, should \Cref{asm:gauss-newton} be too restrictive, a different Hessian approximation satisfying \Cref{asm:inexact-hessian} can be used.
In the sequel, we focus on the Gauss-Newton approximation because of its ubiquity.

The solution of the shifted system
\begin{equation}%
  \label{eq:normal-eqns}
  (J(x_k)^T J(x_k) + \lambda_k I) d_k = -J(x_k)^T F(x_k),
\end{equation}
known as the \emph{regularized normal equations}, can be interpreted as the solution of the regularized linear least-squares problem
\begin{equation}
  \label{eq:reg-lsq}
  \minimize{d} \ \tfrac{1}{2} \left\|
    \begin{bmatrix}
         J(x_k)
      \\ \sqrt{\lambda_k} I
    \end{bmatrix}
    d +
    \begin{bmatrix}
      F(x_k) \\ 0
    \end{bmatrix}
  \right\|^2.
\end{equation}
It is well-known that applying CG (or CG-Lanczos) directly to~\eqref{eq:normal-eqns} is prone to accumulation of rounding errors because \(J(x_k)^T J(x_k)\) can be substantially more ill conditioned than \(J(x_k)\).
Several iterative methods exist for~\eqref{eq:reg-lsq}, but a straightforward approach consists in modifying \Cref{alg:slcg} to accommodate the structure of~\eqref{eq:normal-eqns} and decouple the product with \(J(x_k)\) from that with \(J(x_k)^T\).
Such a modification of CG is known as CGLS and several implementations appear in the literature, the first of which was stated by \citet{hestenes-stiefel-1952}.
As it turns out, \citet{frommer-maass-1999} were also interested in~\eqref{eq:normal-eqns}.
We use their formulation, stated as \Cref{alg:slcgls} for the generic regularized normal equations
\begin{equation}%
  \label{eq:generic-normal-eqns}
  (A^T A + \lambda I) x = A^T b,
\end{equation}
where \(A\) can be rectangular.

\begin{algorithm}[htbp]
  \caption{Lanczos-CGLS with shifts for~\eqref{eq:generic-normal-eqns}}%
  \label{alg:slcgls}
  \begin{algorithmic}[1]
    \State Set \(\bm{x}_0 = 0\), \(u_0 = b\), \(\beta_0 v_0 = A^T u_0\), \(u_0 = u_0 / \beta_0\), \(\bm{p}_0 = \beta_0 v_0\) \Comment{\(\beta_0\) such that \(\|v_0\| = 1\)}
    \State set \(u_{-1} = 0\), \(\bm{\sigma}_0 = \beta_0\), \(\bm{\omega}_{-1} = 0\),
      \(\bm{\gamma}_{-1} = 1\), \(\bm{\pi}_0 = \beta_0^2\) \Comment{\(\bm{\pi}_0 = \|\bm{p}_0\|^2\)}
    \For{\(j \gets 0, 1, 2, \dots\)}
      \State\label{alg:slcgls-uj-tilde}%
        \(\tilde{u}_j = A v_j\)
        \Comment{Lanczos part of the iteration}
      \State \(\delta_j = \tilde{u}_j^T \tilde{u}_j\)
        \Comment{\(\delta_j = v_j^T A^T A v_j\)}
      \State \(u_{j+1} = \tilde{u}_j - \delta_j u_j - \beta_j u_{j-1}\)
      \State \(\beta_{j+1} v_{j+1} = A^T u_{j+1}\) \Comment{\(\beta_{j+1}\) such that \(\|v_{j+1}\| = 1\)}
      \State \(u_{j+1} = u_{j+1} / \beta_{j+1}\)
      \State \(\bm{\delta}_j = \delta_j + \bm{\lambda}\)
        \Comment{CGLS part of the iteration in block form}
      \State \(\bm{\gamma}_j = 1 / (\bm{\delta}_j - \bm{\omega}_{j-1} / \bm{\gamma}_{j-1})\)
      \State \(\bm{\omega}_j = (\beta_{j+1} \bm{\gamma}_j)^2\)
      \State \(\bm{\sigma}_{j+1} = -\beta_{j+1} \bm{\gamma}_j \bm{\sigma}_j\) \Comment{\(\bm{\sigma}_{j+1} = \|\bm{r}_{j+1}\|\)}
      \State \(\bm{x}_{j+1} = \bm{x}_j + \bm{\gamma}_j \bm{p}_j\)
      \State \(\bm{p}_{j+1} = \bm{\sigma}_{j+1} v_{j+1} + \bm{\omega}_j \bm{p}_j\)
      \State 
        \(\bm{\pi}_{j+1} = \bm{\sigma}_{j+1}^2 + \bm{\omega}_j^2  \bm{\pi}_j\) \Comment{\(\bm{\pi}_{j+1} = \|\bm{p}_{j+1}\|^2\)}
    \EndFor
  \end{algorithmic}
\end{algorithm}

If \(A\) has size \(m\)\(\times\)\(n\), each \(v_j \in \R^n\) and we see that \Cref{alg:slcgls} also updates auxiliary vectors \(u_j \in \R^m\).
Typical least-squares problems are over-determined, i.e., \(m > n\), although the algorithm remains well defined for \(m \leq n\).
The vector \(\tilde{u}_j\) at line~\ref{alg:slcgls-uj-tilde} can share storage with \(u_{j+1}\).

Note that the residual vector whose norm is updated as \(\bm{\sigma}_{j+1}\) now represents the optimality residual \(\bm{r}_j := A^T (b - A \bm{x}_j)\).
\Cref{alg:slcgls} can be further improved by monitoring the least-squares residual \(\bm{s}_j := b - A \bm{x}_j\).
The latter would be initialized as \(\bm{s}_0 = u_0 = b\) and simply updated as \(\bm{s}_{j+1} = \bm{\sigma}_j u_j\) after \(u_j\) has been normalized.
Contrary to the \(v_j\), the \(u_j\) are not orthogonal so that \(\|\bm{s}_j\|\) must be explicitly computed if it is of interest.

A final difference with \Cref{alg:slcg} is that \Cref{alg:slcgls} need not be concerned with detection of negative curvature.

\section*{Discussion}%
\label{sec:Ccl}

We introduced a new scalable factorization-free implementation of the \arcq variant of the adaptive regularization by cubics.
An inexact search direction is computed based on the simultaneous solution of a set of shifted linear systems.
The resulting method, named \arcqk, enjoys a complexity bound of \(O(\epsilon^{-3/(1 + \zeta)})\) where \(\zeta \in (0, 1]\) guides the accuracy of the system solves, and converges superlinearly.
Our implementation is competitive with the method of~\citet{steihaug-1983} and~\citet{toint-1981} on the unconstrained \textsf{CUTEst} problems in terms of time, and number of objective and gradient evaluations.
\arcqk has a noticeable advantage in terms of number of Hessian-vector products.

We also discussed a variant of \arcqk for nonlinear least-squares problems.
Our complexity bound continues to apply if the Gauss-Newton Hessian approximation is accurate along the step.

Certainly, much work remains to be done to develop a reliable and efficient version of \Cref{alg:ARCqK}, but the numerical results of this section speak in favor of our approach.
Among possible improvements, we note that our current implementation solves all the shifted systems up front.
Although this could be performed in parallel, there may be virtue to only solving a system when it is needed.
The CG iterations could be put on hold when a shifted system has been solved to the desired accuracy, and resumed only if it is determined that a system corresponding to a larger value of the shift should be solved.

In practice, preconditioning CG is important if one is to develop a robust implementation.
Of course, preconditioning is strongly application dependent, but generic (e.g., diagonal or banded) preconditioners can prove generally useful.

The numerical stability of the so-called \emph{multishift} CGLS has been studied for some time in the literature and several variants have been proposed, each with its own numerical properties.
In practice, implementations tend to depend on the condition number of \(A^T A\) rather than that of \(A\) even though it is not the case when applying CGLS to solve a single shifted system.
\Citet{vandeneshof-sleijpen-2004} provide a review, numerous references, and an implementation of the multishift CGLS with favorable stability properties, although it is based on the standard CG rather than the Lanczos variant of CG\@.
In this paper, we used the implementation of \citet{frommer-maass-1999} and its stability properties should be properly evaluated.

A possible improvement over CGLS is to use a variant of LSQR \citep{paige-saunders-1982} adapted to solve regularized linear least-squares problems with multiple values of the regularization parameter.
However a disadvantage of CG and CGLS is that they do not reduce the residual norm \(\|r_k\|\) monotonically.
In the inexact-Newton context of the present research, an iterative method that reduces the residual norm monotonically could be preferable.
Such methods include the method of conjugate residuals, also introduced by \citet{hestenes-stiefel-1952} for symmetric positive-definite systems, or its MINRES variant of \citet{paige-saunders-1975} for general symmetric systems, and LSMR of \citet{fong-saunders-2011} for least squares.
Variants of some of those methods to accommodate multiple shifts are described by \citet{glassner-gusken-lippert-ritzenhofer-schilling-frommer-1996} and \citet{jegerlehner-1996}.

On the optimization side, \citet{gould-porcelli-toint-2012} suggest more sophisticated parameter update strategies for least-squares problems that could prove beneficial to \arcqk.

  \small
  \bibliographystyle{abbrvnat}
  \bibliography{abbrv,arcqk,ScalableARC}
  \normalsize

  \newpage
  \hypertarget{contents}{}
  \tableofcontents

\end{document}


\maketitle

\section{Legend}

The table headers are as follows:

\begin{itemize}
  \item ``name'' is the problem name;
  \item ``nvar'' is the number of variables;
  \item $f(x)$ is the final objective value;
  \item $\|g(x)\|$ is the final gradient $\ell_2$-norm;
  \item ``iter'' is the number of iterations;
  \item $\# f$, $\# g$ and $\# Hv$ is the number of objective and gradient evaluations, and of Hessian-vector products, respectively;
  \item $t$ is the solve time;
  \item ``stat'' is the final status, which is blank in case of success, ``x'' if an exception occurs, ``e'' if the time budget is exceeded, or ``?'' if a different error occurs.
\end{itemize}

\section{ARCqK Results}





\maketitle

\section{A detailed example}

Here we include some equations and theorem-like environments to show
how these are labeled in a supplement and can be referenced from the
main text.
Consider the following equation:
\begin{equation}
  \label{eq:suppa}
  a^2 + b^2 = c^2.
\end{equation}
You can also reference equations such as \cref{eq:matrices,eq:bb} 
from the main article in this supplement.

\lipsum[100-101]

\begin{theorem}
  An example theorem.
\end{theorem}

\lipsum[102]
 
\begin{lemma}
  An example lemma.
\end{lemma}

\lipsum[103-105]

Here is an example citation: \cite{KoMa14}.

\section[Proof of Thm]{Proof of \cref{thm:bigthm}}
\label{sec:proof}

\lipsum[106-114]

\section{Additional experimental results}
\Cref{tab:foo} shows additional
supporting evidence. 

\begin{table}[htbp]
  \caption{Example table}
  \label{tab:foo}
  \centering
  \begin{tabular}{|c|c|c|} \hline
   Species & \bf Mean & \bf Std.~Dev. \\ \hline
    1 & 3.4 & 1.2 \\
    2 & 5.4 & 0.6 \\ \hline
  \end{tabular}
\end{table}

\bibliographystyle{siamplain}
\bibliography{references}